\documentclass[twoside,10pt]{article}
\usepackage[ansinew]{inputenc}
\usepackage[T1]{fontenc}
\usepackage{amsmath}
\usepackage{amsthm}
\usepackage{amscd, amsfonts, amssymb}
\usepackage[all]{xy}
\usepackage{mathrsfs}
\usepackage[dvips]{graphicx}
\usepackage{epic, eepic}
\usepackage{titling}
\usepackage{titlesec}
\usepackage[dotinlabels]{titletoc}
\usepackage{abstract}
\usepackage{mathptmx}       
\usepackage{helvet}         
\usepackage{courier}        
\usepackage{fix-cm}

\DeclareSymbolFont{cmletters}{OML}{cmm}{m}{it}
\DeclareSymbolFont{cmsymbols}{OMS}{cmsy}{m}{n}
\DeclareSymbolFont{cmlargesymbols}{OMX}{cmex}{m}{n}
\DeclareMathSymbol{\myjmath}{\mathord}{cmletters}{"7C}
\DeclareMathSymbol{\myamalg}{\mathbin}{cmsymbols}{"71}
\DeclareMathSymbol{\mycoprod}{\mathop}{cmlargesymbols}{"60}
\let\jmath\myjmath

\numberwithin{equation}{section}

\theoremstyle{definition}
\newtheorem{dfn}{Definition}[section]
\newtheorem{example}{Example}[section]

\newtheorem{remark}{Remark}[section]
\theoremstyle{definition}
\newtheorem{prop}{Proposition}[section]
\newtheorem{thm}[prop]{Theorem}

\newtheorem{corollary}[prop]{Corollary}
\newtheorem{lem}[prop]{Lemma}
\newtheorem*{thm*}{Theorem}

\newcommand{\Char}{\mathrm{Ch}}
\newcommand{\Irr}{\mathrm{Irr}}

\newcommand{\AL}{\mathbf{L}}
\newcommand{\Gam}{\mathnormal{\Gamma}}
\newcommand{\ep}{\varepsilon}

\newcommand{\rrangle}{\rangle\!\!\rangle}
\newcommand{\llangle}{\langle\!\!\langle}

\newcommand{\Com}{\mathsf{Com}}
\newcommand{\Mod}{\mathsf{Mod}}

\newcommand{\Der}{\mathrm{Der}}
\newcommand{\SDer}{\mathscr{D} e r}

\DeclareMathOperator{\id}{id}

\DeclareMathOperator{\ob}{ob}

\DeclareMathOperator{\Ann}{Ann}
\DeclareMathOperator{\Aut}{Aut}
\DeclareMathOperator{\End}{End}
\DeclareMathOperator{\Hom}{Hom}
\DeclareMathOperator{\im}{im}
\DeclareMathOperator{\Gal}{Gal}

\DeclareMathOperator*{\dirlim}{\underset{\longrightarrow}{\lim}}

\setcounter{tocdepth}{2}

\setlength{\absleftindent}{0mm}

\setlength{\abstitleskip}{-1em}

\begin{document}
\pretitle{\begin{flushleft}\LARGE\sffamily} 
\posttitle{\par\end{flushleft}\rule[8mm]{\textwidth}{0.1mm}}
\preauthor{\begin{flushleft}\large\scshape\vspace{-8mm}}
\postauthor{\end{flushleft}\vspace{-8mm}}
\title{Arithmetic hom-Lie algebras, $L$-functions and $p$-adic Hodge theory} 
\author{Daniel Larsson\\
\vspace{0.2cm}
\small Buskerud and Vestfold University College\\
  Pb. 235, 3603
 Kongsberg, Norway\\
 \textnormal{\texttt{daniel.larsson@hbv.no}}
 \normalsize\normalfont}
\date{}


\maketitle
\begin{onecolabstract}
\noindent We construct examples of hom-Lie algebras in some arithmetic contexts. 
\end{onecolabstract}
\tableofcontents
\subsection*{Notations.} The following notations will be adhered to
throughout. 
\begin{itemize}
 \item[-] $k$ will denote a commutative, associative integral domain with unity. 
 \item[-] $\Com(k)$, $\Com(B)$ e.t.c, the category of, commutative,
 associative $k$-algberas ($B$-algebras, etc) with unity. Morphisms of
 $k$-algebras ($B$-algebras, e.t.c) are always unital, i.e.,
 $\phi(1)=1$. 
 \item[-] $A^\times$ is the set of units in $A$ (i.e., the set of
 invertible elements). 
 \item[-] $\Mod(A)$, the category of $A$-modules. 
 \item[-] $\End(A):=\End_k(A)$, the $k$-module of algebra morphisms on $A$. 
 \item[-] $\circlearrowleft_{a,b,c}(\,\cdot\,)$ will mean
cyclic addition of the expression in bracket.
\item[-] When writing actions of group elements we will use the notations
  $\sigma(a)$ and $a^\sigma$, meaning the same thing: the action of $\sigma$ on $a$. 
\end{itemize}
\section{Introduction}The humble aim of this paper is to try to lure some number theorists to believing that a structure known as hom-Lie algebras might be interesting to study in number theory and arithmetic geometry. 

As a short historical account of the emergence of hom-Lie algebras, you could say that it started, like many other things, by Euler. I don't know who exactly introduced the main basic protagonist for this note, namely the notion of $q$-difference (or differential) operator, but I know that F.H Jackson studied these operators and their inverses, ``$q$-integrals'', quite extensively in the early twentieth century. The reason for his interest was that $q$-difference operators approximate ordinary derivations to arbitrary high precision, and provides a discrete analogue of derivations. Therefore these operators have found important applications in numerical analysis, for instance. 

In addition, $q$-analogues of different objects turn up on a day-to-day basis in combinatorics, special functions, functions over finite fields, quantum groups and mathematical physics (``$q$-deformations'') to name a few areas. And of course in number theory. However, it is only relatively recently that $q$-difference operators have begun to interest arithmetically inclined researchers. In the nineties, Yves Andr\'e, Lucia di Vizio and other people started investigating $q$-calculus in the context of number theory and arithmetic geometry. 

However, we were unaware of these possible number-theoretic applications at the time we introduced hom-Lie algebras \cite{HaLaSi}. We were instead motivated by $q$-deformation algebras appearing in physics (mostly from quantum field theory). One such example is the Virasoro algebra, which in the centreless version (also known as the Witt algebra) make an appearance later in this note. It was known that many of these $q$-deformations were actually coming from algebras of $q$-deformed derivations, i.e., $q$-difference operators, but a general underlying algebraic structure was lacking. 

Observing (we were of course not the first ones to do this) that $q$-difference operators are special cases of so-called $\sigma$-twisted derivations ($\sigma$-derivations, $\sigma$-differential operators, \dots), with $\sigma$ an endomorphism on some algebra underlying the structure (algebras of functions on some space), we defined a twisted multiplication of such $\sigma$-derivations. From the resulting structure abstracted the notion of hom-Lie algebra (and later for more general $\sigma$-derivations, quasi-hom-Lie algebras and more generally quasi-Lie algebras). Later other people generalized this in different directions, but the notion originates as algebras of $\sigma$-twisted derivations.

So, it is not especially surprising that this evaded the interest of number theorist,s as the motivation for introducing hom-Lie algebras was very far from number theory. In addition, we ourselves never had any thoughts in that direction since we weren't trained as number theorists but as algebraists/geometers. However, when I taught a course in number theory in 2008, I came to realize that hom-Lie algebras are really very natural objects to study in arithmetic contexts. This note is therefore mostly propaganda.

The contents of the paper is well-described by the table of contents so I won't go into any details here. 

This paper is a ``companion paper'' to the paper \cite{LarArithom} where more details concerning hom-Lie algebras are given. Here we only very briefly indicate the main points and refer to \cite{LarArithom} for full details.

\subsubsection*{Acknowledgement} This paper is an expanded version of a letter to J\"urgen Ritter trying to explain the main ideas of hom-Lie algebras together with examples from the theory of $L$-functions and Iwasawa theory. I wish to thank him for useful comments and ideas.  
\section{Hom-Lie algebras and twisted derivations}
\subsection{Generalities}
Let $A\in\ob(\Com(k))$ and let $\sigma: A\to A$ be a $k$-linear map on
$A$. Then a \emph{twisted derivation} on $A$ is a $k$-linear map
$\partial: A\to 
A$ satisfying $$\partial(ab)=\partial(a)b+\sigma(a)\partial(b).$$
We can generalize this as follows. Let $A$ and $\sigma$ be as above, and
$M\in\ob(\Mod(A))$. The action of $a\in A$ on $m\in M$ will be denoted
$a.m$. Then, a \emph{twisted derivation on $M$} is $k$-linear map
$\partial: M\to M$ such that 
\begin{equation}\label{eq:Leibniz}
\partial(a.m)=\partial_A(a).m+\sigma(a).\partial(m),
\end{equation} where, by
necessity, $\partial_A:A\to A$ is a twisted derivation on $A$ (in the
first sense). We call $\partial_A$ the \emph{restriction of $\partial$
  to $A$}. Finally, a \emph{twisted module derivation} is a
$k$-linear map $\partial : A\to M$ such that 
$$\partial(ab)=b.\partial(a)+\sigma(a).\partial(b),$$ for
$\sigma\in\End(A)$.  Normally we will not differentiate between left and right modules structures, but there are times when such a distinction would be necessary. 

We will sometimes refer to the above as \emph{$\sigma$-twisted
  (module) derivations} if we want to emphasize which $\sigma$ we 
  refer to. 

Let $\sigma\in\End(A)$ and denote by $A^{(\sigma)}:=A\otimes_{A,\sigma} A$, the extension of scalars along $\sigma$. This means that we consider $A$ as a left module over itself via $\sigma$, i.e., $a.b:=\sigma(a)b$. The right module structure is left unchanged. If $M$ is an $A$-module, we put 
$$M^{(\sigma)}:=A^{(\sigma)}\otimes_A M=A\otimes_{A,\sigma} M,$$i.e., $M$ is endowed with left module structure $a.m:=\sigma(a)m$, and once more, the right structure is unaffected.

We note that a $\sigma$-derivation $d_\sigma$ on $A$ is actually a derivation $d: A\to A^{(\sigma)}$ and conversely. Indeed,
$$d(ab)=d(a)b+a.d(b)=d(a)b+\sigma(a)d(b).$$In the same manner, a $\sigma$-derivation $d_\sigma: A\to M$ is a derivation $d: A\to M^{(\sigma)}$, and conversely. 

Therefore, there is a one-to-one correspondence between $\sigma$-derivations $d_\sigma:A\to M$ and derivations $d:A\to M^{(\sigma)}$. 
\begin{example}The ``universal'' (this designation will be amply demonstrated in what follows) example of a $\sigma$-derivation is the following. Let $A\in\ob(\Com(k))$ and
  $M\in\ob(\Mod(A))$. Suppose 
  $\sigma: M\to M$ is $\varsigma$-semilinear, i.e.,
  $\sigma(a.m)=\varsigma(a).\sigma(m)$, for $a\in A$ and $m\in M$, where
  $\varsigma\in\End(A)$. Then, a small computation shows that 
  for all $b\in A$, 
  $\partial:=b(\id-\sigma): M\to M$, is a $\varsigma$-twisted derivation on
  $M$.  
Notice that if $M=A$, we automatically get
  $\varsigma=\sigma$. On the other hand, given a $\sigma$-twisted derivation
  $\partial=(\id-\sigma): M\to M$, with $\sigma$ $\varsigma$-semilinear, there
  is a $\varsigma$-linear map defined by 
  $\sigma=\id-\partial$. More generally, for $a\in A^\times$,
  $\partial_a=a(\id-\sigma)$, defines a $\varsigma$-semilinear map
  $\sigma=\id-a^{-1}\partial_a$. Hence, there is a duality (in
  some sense) between twisted derivations and semilinear
  maps.

\end{example}

\subsection{Algebras of twisted derivations}\label{sec:moduletwisted}
Let $M$ be an $A$-module. Then the $k$-modules of $\sigma$-twisted derivations,
\begin{align*}
\mathrm{Der}_{\sigma}(M)&:=\{\partial\in\End_k(M)\mid
  \partial(a.m)=\partial_A(a).m+\sigma(a).\partial(m)\},\quad\text{and}\\
  \mathrm{Der}_\sigma(A,M)&:=\{\partial\in\Hom_k(A,M)\mid
  \partial(ab)=\partial(a).b+\sigma(a).\partial(b)\}
\end{align*} are left $A$-modules
 
Note that unlike the case of ordinary derivations, $\mathrm{Der}_\sigma(M)$
or $\mathrm{Der}_\sigma(A,M)$ are not Lie algebras. 
\subsubsection{Equivariant hom-Lie algebras}
Let $G$ be a group, $A$ a $k$-algebra and $M$ an $A$-module. 
\begin{dfn}\label{dfn:globhom}An \emph{equivariant hom-Lie
  algebra} for $G$ over
  $A$ is a $(G-A)$-module $M$ together
  with a $k$-bilinear product $\llangle\,\cdot,\cdot\,\rrangle$ on
  $M$ such that 
  \begin{itemize} 
       \item[(hL1.)] $\llangle a,a\rrangle=0$, for all $a\in M$;
       \item[(hL2.)] $\circlearrowleft_{a,b,c}\big (\llangle a^\sigma,\llangle
       b,c\rrangle\rrangle+q_\sigma\cdot\llangle a,\llangle
       b,c\rrangle\rrangle\big)=0$, for all $\sigma\in G$ and some $q_\sigma\in A$. 
  \end{itemize} A morphism of equivariant hom-Lie algebras $(M, G)$ and
       $(M', G')$ is a pair $(f,\psi)$ of a morphism of
       $k$-modules $f: M\to M'$ and $\psi: G\to G'$ such 
       that $f\circ \sigma=\psi(\sigma)\circ f$, and $f\llangle
       a,b\rrangle_{M} =\llangle  
       f(a),f(b)\rrangle_{M'}$. 
\end{dfn}Notice that the definition implies that for a morphism $$(f,\psi): (M,G)\to (M', G')$$ we must have $f(q_\sigma)=q_{\psi(\sigma)}$. 

We get a \emph{hom-Lie algebra} when we simply consider one $\sigma\in G$. Notice that for $\sigma=\id$ we get a Lie algebra. 

By an \emph{arithmetic (equivariant) hom-Lie algebra} I mean, loosely speaking, a hom-Lie structure over some arithmetically interesting ring $A$. 

\begin{remark}
Actually, one can make sense of the proposition that hom-Lie algebras are Lie algebras ``in a suitably twisted category''. 
\end{remark}
\subsubsection{Products}
Let, as before, $A\in\ob(\Com(k))$ and let $\sigma\in\End(A)$. Denote
by $\delta_\sigma$ a $\sigma$-twisted derivation on $M$ whose
restriction to $A$ is $\partial$, i.e.,
$\delta_\sigma\in\mathrm{Der}_\sigma(M)$ and
$\partial\in\mathrm{Der}_\sigma(A)$. Assume that 
$\sigma(\Ann(\delta_\sigma))\subseteq \Ann(\delta_\sigma)$, where
$$\Ann(\delta_\sigma):=\{a\in A\mid a\delta_\sigma(m)=0,\quad \text{for
  all}\quad m\in M\},$$ and that
 \begin{equation}\label{eq:qsigmapartial}
 \partial\circ\sigma=q\cdot\sigma\circ\partial,\qquad\text{for some $q\in 
  A$.}
  \end{equation} Form the left $A$-module 
$$A\cdot\delta_\sigma:=\{a\cdot\delta_\sigma\mid a\in A\}.$$ Define
\begin{equation}\label{eq:prodtwist}
\llangle a\cdot\delta_\sigma,b\cdot\delta_\sigma\rrangle:=
\sigma(a)\cdot\delta_\sigma(b\cdot\delta_\sigma)-
\sigma(b)\cdot\delta_\sigma(a\cdot\delta_\sigma). 
\end{equation}This should be interpreted as 
$$
\llangle a\cdot\delta_\sigma,b\cdot\delta_\sigma\rrangle(m):=
\sigma(a)\cdot\delta_\sigma(b\cdot\delta_\sigma(m))-
\sigma(b)\cdot\delta_\sigma(a\cdot\delta_\sigma(m)),$$for $m\in M$. 
We now have the following fundamental theorem.
\begin{thm}\label{thm:twistprod}Under the above assumptions,
  equation (\ref{eq:prodtwist}) gives a 
  well-defined $k$-linear product on $A\cdot\delta_\sigma$ such that 
\begin{itemize}
\item[(i)] $\llangle a\cdot\delta_\sigma, b\cdot\delta_\sigma\rrangle=
  (\sigma(a)\partial(b)-\sigma(b)\partial(a))
  \cdot\delta_\sigma$;  
\item[(ii)] $\llangle a\cdot \delta_\sigma, a\cdot\delta_\sigma\rrangle=0$;
\item[(iii)] $\circlearrowleft_{a,b,c}\big(\llangle \sigma(a)\cdot
  \delta_\sigma,\llangle
  b\cdot\delta_\sigma,c\cdot\delta_\sigma\rrangle\rrangle+ q\cdot\llangle
  a\cdot\delta_\sigma,\llangle
  b\cdot\delta_\sigma,c\cdot\delta_\sigma\rrangle\rrangle\big)=0$,
\end{itemize} where, in (iii), $q$ is the same as in (\ref{eq:qsigmapartial}).
\end{thm}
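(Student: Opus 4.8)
The plan is to establish the three items in order, since each feeds into the next. First, for well-definedness and (i): I would expand $\delta_\sigma(b\cdot\delta_\sigma)$ using the Leibniz rule \eqref{eq:Leibniz} for $\sigma$-twisted derivations. Applied to the operator $b\cdot\delta_\sigma$ acting on $m$, this gives $\delta_\sigma(b\cdot\delta_\sigma(m)) = \partial(b)\cdot\delta_\sigma(m) + \sigma(b)\cdot\delta_\sigma(\delta_\sigma(m))$. Multiplying by $\sigma(a)$ and subtracting the symmetric term, the second-order terms $\sigma(a)\sigma(b)\delta_\sigma^2(m)$ cancel because $A$ is commutative and $\sigma$ is an algebra endomorphism, leaving exactly $(\sigma(a)\partial(b) - \sigma(b)\partial(a))\cdot\delta_\sigma(m)$. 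This proves (i), and simultaneously shows the product lands in $A\cdot\delta_\sigma$ (so it is well-defined as a product on that module); $k$-bilinearity is then immediate from (i) and the $k$-linearity of $\partial$. The hypothesis $\sigma(\Ann(\delta_\sigma))\subseteq\Ann(\delta_\sigma)$ is what guarantees the coefficient $\sigma(a)\partial(b)-\sigma(b)\partial(a)$ is well-defined modulo $\Ann(\delta_\sigma)$, i.e.\ that the assignment $a\mapsto a\cdot\delta_\sigma$ interacts consistently with the formula.

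Item (ii) is then trivial: setting $b = a$ in (i) gives $(\sigma(a)\partial(a) - \sigma(a)\partial(a))\cdot\delta_\sigma = 0$.

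For (iii), which is the real content, I would compute $\llangle b\cdot\delta_\sigma, c\cdot\delta_\sigma\rrangle = (\sigma(b)\partial(c) - \sigma(c)\partial(b))\cdot\delta_\sigma$ by (i), call its coefficient $w_{bc}$, and then apply (i) again to $\llangle \sigma(a)\cdot\delta_\sigma, w_{bc}\cdot\delta_\sigma\rrangle = (\sigma^2(a)\partial(w_{bc}) - \sigma(w_{bc})\partial(\sigma(a)))\cdot\delta_\sigma$. Expanding $\partial(w_{bc})$ via the twisted Leibniz rule on $A$ produces terms of the shape $\partial\sigma(b)\cdot\partial(c)$, $\sigma^2(b)\cdot\partial\partial(c)$, etc.; at this point I would invoke the crucial relation \eqref{eq:qsigmapartial}, $\partial\circ\sigma = q\cdot\sigma\circ\partial$, to rewrite every occurrence of $\partial\sigma$ as $q\,\sigma\partial$. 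The point of the $q$-corrected Jacobiator in (iii) is that the term $q\cdot\llangle a\cdot\delta_\sigma, \llangle b\cdot\delta_\sigma, c\cdot\delta_\sigma\rrangle\rrangle$ is designed precisely to absorb the discrepancy between $\partial\sigma$ and $\sigma\partial$; after substitution, the coefficient of $\delta_\sigma$ in the cyclic sum becomes a sum of monomials in $\sigma, \sigma^2, \partial, q$ applied to $a,b,c$, and one checks these cancel in pairs under $\circlearrowleft_{a,b,c}$ using commutativity of $A$ and the multiplicativity of $\sigma$.

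The main obstacle will be the bookkeeping in step (iii): keeping track of which arguments carry $\sigma$ versus $\sigma^2$, correctly applying \eqref{eq:qsigmapartial} (note $q$ itself need not be $\sigma$-invariant, so $\sigma(q)$ may appear and must be handled), and verifying that the cyclic sum genuinely telescopes to zero rather than to some residual $q$-dependent term. I would organize this by grouping the expanded expression into ``second-order'' terms (those involving $\partial\partial$ or $\delta_\sigma^2$-type coefficients) and ``first-order'' terms, showing each group vanishes cyclically on its own. A sanity check: specializing $\sigma = \id$ forces $q = 1$ and (iii) must reduce to the ordinary Jacobi identity for the commutator of derivations, which it visibly does.
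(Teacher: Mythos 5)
Parts (i) and (ii) of your plan are fine, and the broad strategy for (iii) --- expand via (i), rewrite $\partial\circ\sigma$ as $q\,\sigma\circ\partial$, then check the cyclic sum --- is the right one. Note, however, that the paper states this theorem without proof (it is quoted from \cite{HaLaSi}), so you will need to supply the details yourself, and there is a genuine gap in how you propose to close the cyclic sum.

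The claim that after substitution the terms ``cancel in pairs under $\circlearrowleft_{a,b,c}$ using commutativity of $A$ and the multiplicativity of $\sigma$'' is not correct. If you set $w_{bc}:=\sigma(b)\partial(c)-\sigma(c)\partial(b)$ and expand, the whole cyclic sum reduces, after the easy pairings you anticipate, to a single residual piece
\[
q\,\circlearrowleft_{a,b,c}\,\sigma(a)\,\partial(w_{bc})
= q\,\partial\Big(\circlearrowleft_{a,b,c}\,a\,w_{bc}\Big),
\]
(using $\sigma(a)\partial(x)=\partial(ax)-\partial(a)x$ and the observation that $\circlearrowleft_{a,b,c}\partial(a)w_{bc}=0$ by commutativity of $A$). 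The inner cyclic sum $\circlearrowleft_{a,b,c}\,a\,w_{bc}$ consists of six monomials of the form $a\sigma(b)\partial(c)$, and these do \emph{not} cancel by commutativity of $A$ and multiplicativity of $\sigma$ alone, nor by applying $\partial\sigma=q\sigma\partial$; there is no way to pair them up directly. The missing ingredient is the \emph{symmetry of the twisted Leibniz rule}: since $A$ is commutative, applying (\ref{eq:Leibniz}) to $\partial(xy)=\partial(yx)$ gives
\[
\partial(x)\big(y-\sigma(y)\big)=\partial(y)\big(x-\sigma(x)\big)\qquad\text{for all }x,y\in A.
\]
Writing $a\sigma(b)-b\sigma(a)=b(\id-\sigma)(a)-a(\id-\sigma)(b)$ and invoking this identity makes the six monomials in $\circlearrowleft_{a,b,c}\,a\,w_{bc}$ cancel in three pairs. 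This identity is the key lemma of the argument, and your plan never isolates it. Relatedly, your proposed organization into ``second-order'' and ``first-order'' groups each vanishing separately is not quite right: one second-order family (terms like $q\sigma(a)\sigma^2(b)\partial^2(c)$) and one first-order family (terms like $q^2\sigma(a)\sigma\partial(b)\partial(c)$) vanish only \emph{jointly}, precisely via the reduction above, and neither vanishes cyclically on its own. Finally, a minor note: your worry about $\sigma(q)$ appearing is unwarranted in this computation --- the relation $\partial\sigma=q\sigma\partial$ is applied exactly once per occurrence and $\sigma$ or $\partial$ is never subsequently applied to the resulting factor of $q$.
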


\begin{corollary}In the case $\delta_\sigma\in\mathrm{Der}_\sigma(A,M)$, defining the algebra structure directly by property (i) in the theorem gives (ii) and (iii) on $A\cdot\delta_\sigma$. 
\end{corollary}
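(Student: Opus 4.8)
The plan is to take property (i) of Theorem~\ref{thm:twistprod} as the \emph{definition} of the bracket on $A\cdot\delta_\sigma$. This is forced on us: since now $\delta_\sigma\colon A\to M$ cannot be composed with itself, the operatorial formula \eqref{eq:prodtwist} is no longer available, but the right-hand side $\bigl(\sigma(a)\partial(b)-\sigma(b)\partial(a)\bigr)\cdot\delta_\sigma$ still makes sense once we retain $\partial\in\mathrm{Der}_\sigma(A)$ with $\partial\circ\sigma=q\cdot\sigma\circ\partial$ as in \eqref{eq:qsigmapartial}. First I would check that this formula is $k$-bilinear and descends to a well-defined operation on $A\cdot\delta_\sigma$: if $a-a'$ and $b-b'$ lie in $\Ann(\delta_\sigma)$, then, using that $\Ann(\delta_\sigma)$ is an ideal of $A$ and that $\sigma(\Ann(\delta_\sigma))\subseteq\Ann(\delta_\sigma)$, the coefficients $\sigma(a)\partial(b)-\sigma(b)\partial(a)$ and $\sigma(a')\partial(b')-\sigma(b')\partial(a')$ differ by an element of $\Ann(\delta_\sigma)$, so the two products agree as maps $A\to M$. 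Property (ii) is then immediate, since the coefficient $\sigma(a)\partial(a)-\sigma(a)\partial(a)$ vanishes.

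For (iii) the key point is that, the bracket now landing back in $A\cdot\delta_\sigma$ with an \emph{explicit} $A$-coefficient, the whole identity collapses to an identity among elements of $A$ applied to $\delta_\sigma$. Writing $d_{bc}:=\sigma(b)\partial(c)-\sigma(c)\partial(b)\in A$, so that $\llangle b\cdot\delta_\sigma,c\cdot\delta_\sigma\rrangle=d_{bc}\cdot\delta_\sigma$, and applying the definition a second time together with \eqref{eq:qsigmapartial} to rewrite $\partial\circ\sigma$ as $q\cdot\sigma\circ\partial$, one gets
\[
\llangle\sigma(a)\cdot\delta_\sigma,\;d_{bc}\cdot\delta_\sigma\rrangle=\bigl(\sigma^{2}(a)\,\partial(d_{bc})-q\,\sigma(d_{bc})\,\sigma(\partial(a))\bigr)\cdot\delta_\sigma,
\]
\[
q\cdot\llangle a\cdot\delta_\sigma,\;d_{bc}\cdot\delta_\sigma\rrangle=\bigl(q\,\sigma(a)\,\partial(d_{bc})-q\,\sigma(d_{bc})\,\partial(a)\bigr)\cdot\delta_\sigma .
\]
Hence (iii) is equivalent to the vanishing, after cyclic summation over $a,b,c$, of the $A$-coefficient
\[
C(a;b,c):=\sigma^{2}(a)\,\partial(d_{bc})-q\,\sigma(d_{bc})\,\sigma(\partial(a))+q\,\sigma(a)\,\partial(d_{bc})-q\,\sigma(d_{bc})\,\partial(a)
\]
against $\delta_\sigma$; I expect $\circlearrowleft_{a,b,c}C(a;b,c)=0$ identically in $A$, which is the same algebraic identity that underlies part (iii) of Theorem~\ref{thm:twistprod}, here used in isolation.

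The remaining work is the expansion. Using that $\sigma$ is an algebra endomorphism and $\partial$ a $\sigma$-derivation, expand
\[
\partial(d_{bc})=\partial(\sigma(b))\,\partial(c)+\sigma^{2}(b)\,\partial(\partial(c))-\partial(\sigma(c))\,\partial(b)-\sigma^{2}(c)\,\partial(\partial(b)),
\]
rewrite each $\partial(\sigma(-))$ as $q\,\sigma(\partial(-))$ by \eqref{eq:qsigmapartial}, and substitute into $C(a;b,c)$; the result is a sum of monomials in $a,b,c$, their images under $\sigma$ and $\sigma^{2}$, and under $\partial$ and $\partial\circ\partial$, weighted by $q$ and $q^{2}$. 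The three-fold cyclic symmetrization then pairs these monomials off and cancels them, exactly as in the classical case $\sigma=\id$, $q=1$ (where $\circlearrowleft_{a,b,c}C(a;b,c)$ is the ordinary Jacobiator of the Witt-type Lie algebra $A\cdot\partial$) and exactly as in the proof of Theorem~\ref{thm:twistprod}, only lighter because no $\delta_\sigma\circ\delta_\sigma$ terms need be carried along. The main obstacle I anticipate is purely bookkeeping: tracking the $q$- and $q^{2}$-weighted second-order terms such as $\sigma^{2}(a)\,q\,\sigma(\partial(\partial(c)))$ together with their cyclic mates so that every cancellation is transparent, and being careful at the few places where it is commutativity of $A$ that identifies two monomials; once the product is defined directly by (i), nothing beyond \eqref{eq:qsigmapartial} and the Leibniz rule for $\partial$ is needed, which is why (ii) and (iii) come almost for free.
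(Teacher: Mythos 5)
The paper states this corollary without proof, so there is nothing to compare it to line by line; but your proposal captures exactly the point the corollary is making, and the overall strategy is sound. The crucial observation, which you state clearly, is that once property~(i) is taken as the \emph{definition} of the bracket, the entire burden of (iii) falls on a purely $A$-coefficient identity in $\partial$, $\sigma$, $q$: the operator $\delta_\sigma$ never gets composed with itself, so the argument is indifferent to whether $\delta_\sigma$ maps $M$ to $M$ or $A$ to $M$. Your reduction of (iii) to the vanishing of $\circlearrowleft_{a,b,c}C(a;b,c)$, and your two displayed expressions for the two summands of $C$, are correct.

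Two small cautions. First, your well-definedness sketch invokes only that $\Ann(\delta_\sigma)$ is an ideal and that $\sigma(\Ann(\delta_\sigma))\subseteq\Ann(\delta_\sigma)$; but the coefficient difference contains a term of the shape $\sigma(b)\,\partial(a-a')$, which needs $\partial(\Ann(\delta_\sigma))\subseteq\Ann(\delta_\sigma)$ as well. In the setting of Theorem~\ref{thm:twistprod} this stability is automatic, because one can apply $\delta_\sigma$ a second time; here, where $\delta_\sigma\colon A\to M$, one must either add it as a hypothesis or restrict to the (typical) case $\Ann(\delta_\sigma)=0$. Second, the phrase ``the three-fold cyclic symmetrization then pairs these monomials off'' slightly oversimplifies the bookkeeping when $\sigma\neq\id$: if one writes $\Phi:=\circlearrowleft\bigl(\sigma^{2}(a)\partial(d_{bc})-q\,\sigma(d_{bc})\sigma(\partial a)\bigr)$ and $\Psi:=\circlearrowleft\bigl(\sigma(a)\partial(d_{bc})-\sigma(d_{bc})\partial(a)\bigr)$, then $\Phi$ and $\Psi$ are each individually nonzero in general (unlike the classical case, where $\Psi$ vanishes alone by the Jacobi identity), and only the weighted combination $\Phi+q\Psi$ vanishes. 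This is the same mechanism that makes Theorem~\ref{thm:twistprod}(iii) a \emph{hom-}Jacobi identity rather than a Jacobi identity, so your appeal to that proof is the right one --- just be aware that the cancellation is a genuine interaction between the $\sigma$-twisted and untwisted Jacobiators, not a term-by-term pairing.
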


We can extend $\sigma$ to an algebra morphism on $A\cdot\delta_\sigma$ by defining $\sigma(a\cdot\delta_\sigma):=\sigma(a)\cdot\delta_\sigma$.

In case $M=A$ is a unique factorization domain (UFD), it is possible (see \cite{HaLaSi}) to
prove the following:
\begin{thm}\label{thm:UFD} If $A$ is a UFD, and $\sigma\in\End(A)$, then
$$\delta_\sigma:=\frac{\id-\sigma}{g}$$
generates $\mathrm{Der}_\sigma(A)$ as a left $A$-module, where
$g:=\gcd((\id-\sigma)(A))$. 
\end{thm}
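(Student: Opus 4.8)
The plan is to show first that $\delta_\sigma := (\id - \sigma)/g$ is a well-defined element of $\mathrm{Der}_\sigma(A)$, and then that every $\sigma$-derivation is an $A$-multiple of it. For the first part, note that $(\id - \sigma)$ is itself a $\sigma$-derivation on $A$ (a one-line check of the twisted Leibniz rule: $(\id-\sigma)(ab) = ab - \sigma(a)\sigma(b) = (a-\sigma(a))b + \sigma(a)(b-\sigma(b))$). Its image $(\id-\sigma)(A)$ is an ideal-like set inside $A$; since $A$ is a UFD, $g := \gcd((\id-\sigma)(A))$ exists (well-defined up to units), and by definition $g$ divides every element $(\id-\sigma)(a)$. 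Hence $\delta_\sigma(a) := (\id-\sigma)(a)/g$ is a genuine element of $A$ for every $a$, the map $\delta_\sigma$ is $k$-linear, and it satisfies the same twisted Leibniz rule as $\id-\sigma$ since we are merely dividing the whole identity by the fixed element $g$. So $\delta_\sigma \in \mathrm{Der}_\sigma(A)$.

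Next I would show $\mathrm{Der}_\sigma(A) = A\cdot\delta_\sigma$. Let $\partial \in \mathrm{Der}_\sigma(A)$ be arbitrary. The key observation is that any such $\partial$ is determined by a "ratio" against $\id-\sigma$: for $a, b \in A$ one computes, using the twisted Leibniz rule for both $\partial$ and $\id-\sigma$, that
\[
\partial(a)\,(\id-\sigma)(b) - \partial(b)\,(\id-\sigma)(a) = 0,
\]
i.e. $\partial(a)(b - \sigma(b)) = \partial(b)(a-\sigma(a))$ — this is the standard cross-multiplication identity: expand $\partial(ab)$ two ways against $(\id-\sigma)(ab)$ and cancel. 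This says that the "fraction" $\partial(a)/(\id-\sigma)(a)$ is independent of $a$ (wherever the denominator is nonzero), so there is a single element, call it $h$, in the fraction field of $A$ with $\partial(a) = h\cdot(\id-\sigma)(a)$ for all $a$. Writing $h = p/q$ in lowest terms in the UFD $A$, the relation $q\,\partial(a) = p\,(\id-\sigma)(a)$ for all $a$ forces $q \mid (\id-\sigma)(a)$ for all $a$ (since $\gcd(p,q)=1$), hence $q \mid g$; write $g = qg'$. Then $\partial(a) = (p/q)(\id-\sigma)(a) = p g' \cdot (\id-\sigma)(a)/g = (p g')\cdot \delta_\sigma(a)$, so $\partial = (pg')\cdot\delta_\sigma \in A\cdot\delta_\sigma$, as desired. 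Conversely every $A$-multiple of $\delta_\sigma$ lies in $\mathrm{Der}_\sigma(A)$ because $\mathrm{Der}_\sigma(A)$ is a left $A$-module.

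The main obstacle is the clean handling of the "common ratio" argument: one must make sure the cross-multiplication identity really does pin down a single scalar $h$ in the fraction field (this needs $A$ to be a domain, which it is), and that dividing by $g$ on the nose — rather than by some proper divisor — is forced by the gcd being taken over the \emph{entire} image $(\id-\sigma)(A)$ and not just over a generating subset. A minor degenerate case to dispatch separately is $\sigma = \id$, where $\id-\sigma = 0$, $g = 0$, and the statement reduces to the (true but vacuous-looking) claim that $\mathrm{Der}_{\id}(A) = \mathrm{Der}(A)$ is generated by the zero derivation when $A$ has no nonzero ordinary derivations — so one should either exclude this case or interpret $g=0$ appropriately. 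I expect the full details to follow \cite{HaLaSi} essentially verbatim.
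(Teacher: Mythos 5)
The paper itself supplies no proof of this theorem; it simply cites \cite{HaLaSi}, so there is nothing internal to compare against. Your argument is correct, and it is the natural one. Two small points are worth tightening. First, the identity
$\partial(a)\,(\id-\sigma)(b) = \partial(b)\,(\id-\sigma)(a)$
is right, but your description of how it arises (``expand $\partial(ab)$ two ways against $(\id-\sigma)(ab)$ and cancel'') does not quite produce it. The clean route uses only the twisted Leibniz rule for $\partial$ together with commutativity of $A$: expand $\partial(ab)$ and $\partial(ba)$ separately and set them equal, getting $\partial(a)b+\sigma(a)\partial(b)=\partial(b)a+\sigma(b)\partial(a)$, which rearranges at once to the displayed identity. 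No Leibniz rule for $\id-\sigma$ is needed here. With that identity in hand, the rest of your argument — pin down $h\in\mathrm{Frac}(A)$, write $h=p/q$ in lowest terms, deduce $q\mid(\id-\sigma)(a)$ for every $a$, hence $q\mid g$, hence $\partial=(pg')\delta_\sigma$ — is exactly right, and uses the UFD hypothesis where it is genuinely needed (Euclid's lemma and existence of gcd's).

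Second, your treatment of the degenerate case $\sigma=\id$ is muddled. For $\sigma=\id$, $\mathrm{Der}_\sigma(A)$ is the full module of ordinary derivations, which is typically nonzero for a UFD (think of $\partial_x$ on $k[x]$), so it is certainly not ``generated by the zero derivation.'' Since $g=0$ makes $\delta_\sigma$ undefined in that case, the statement simply does not apply when $\sigma=\id$, and the case must be excluded outright rather than given a vacuous interpretation. Your ultimate conclusion (``exclude this case'') is correct; the reasoning offered for it is not.
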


\begin{example}When $A=K$ is a field the above theorem
  implies that every $\sigma$-twisted derivation is on the form given
  in the statement. In fact, every  $\sigma$-derivation is on the form $a(\id-\sigma)$. 
\end{example}

\section{Equivariant hom-Lie algebras in three arithmetic contexts}

There are three constructions that I find particularly interesting in this context and they are very closely related: $L$-functions, Galois modules and Iwasawa theory. Let me take them in turn. In fact, they all centres around $L$-functions in some form ($p$-adic or ordinary). 
\subsection{$L$-functions}

Let $K/k$ be a finite Galois extension (that the extension is finite is not necessary, but let's assume this for simplicity), with Galois group $G$. In addition let $E$ be a realization field for $G$, i.e., a field over which all irreducible representations of $G$ are defined. We can assume that $E/\mathbb{Q}$ is Galois with group $\Gam$. It is reasonable that we chose $E$ as small as possible. 

The construction below also work in the case of $S$-truncated $L$-functions but for simplicity of notation we assume that we have the full $L$-functions. 

There is an action of $\Gam$ on every character $\chi$ of $G$ defined by 
$$\chi^\sigma:=\sigma\circ \chi, \qquad \sigma\in\Gam.$$ If $\chi$ is irreducible, then so is $\chi^\sigma$ (at least when $G$ is finite). Therefore, $\Gam$ acts on the set of irreducible characters of $G$, and the construction below highlights the interplay between $G$ and $\Gam$. 

Let $R$ be a subring of $\mathbb{C}$ and let $\AL_R$ be the algebra generated as 
$$\AL_R:=\mathrm{Sym}^\#\big(\bigoplus_{\chi\in\Irr(G)} R\cdot L_\chi\big)=\mathrm{Sym}^\#\big(\bigoplus_{\xi\in \Char(G)}R\cdot L_\xi\big),$$ where the symbol $\#$ indicates that we impose the relations 
$$L_{\chi+\vartheta}=L_\chi\cdot L_\vartheta$$ in the algebra structure of the symmetric algebra. By $\Char(G)$ we mean the group of (virtual) characters of $G$:
$$\Char(G)=\bigoplus^{|\Irr(G)|}_{i=1} \mathbb{Z} \chi_i, \quad \text{where $\chi_i\in\Irr(G)$}.$$ At this point the $L_\chi$ are only formal symbols.

We have a natural sequence of maps
$$\xymatrix{\bigoplus_{\chi\in\Irr(G)} R\cdot L_\chi\ar[r]&  \bigoplus_{\xi\in \Char(G)} \mathbb{C}\cdot L_\xi\ar[r]^{sp}\ar@{..>}[d]&
\mathrm{Frac}(\mathbb{C}[\![s]\!])\\
&\Hom(\Char(G),\mathrm{Frac}(\mathbb{C}[\![s]\!])^\times),}$$ where the first map is the natural inclusion, the vertical is $$L_\xi\longmapsto \begin{cases}
\theta\mapsto L(\theta,s) & \text{if } \theta=\xi\\
\theta\mapsto 1 & \text{otherwise},\end{cases}$$ and last map is the specialization\footnote{This assumes that Artin's conjecture on the analyticity of $L$-functions is true. However, in the case of negative integral points $s$ this is ok since the $L$-functions vanish at these points.} map $$sp: \,\, \AL_\mathbb{C}\longrightarrow
\mathrm{Frac}(\mathbb{C}[\![s]\!]),\quad L_\xi\mapsto L(\xi,s).$$ I want to stress that this map is dependent on a choice of variable $s$, but let us ignore this. 

This sequence of maps induce the following sequence
$$\xymatrix{\mathrm{Sym}\big(\bigoplus_{\chi\in\Irr(G)} R\cdot L_\chi\big)\ar[r]]&  \mathrm{Sym}\big(\bigoplus_{\xi\in \Char(G)} \mathbb{C}\cdot L_\xi\big)\ar[r]^<<<<<<{sp}\ar@{..>}[d]&
\mathrm{Frac}(\mathbb{C}[\![s]\!])\\
&\Hom(\Char(G),\mathrm{Frac}(\mathbb{C}[\![s]\!])^\times).}$$ The horizontal arrows are ring morphisms but the dotted arrow are not since the group $\Hom(\Char(G),\mathrm{Frac}(\mathbb{C}[\![s]\!])^\times)$ is not a ring. This induces $$\xymatrix{\AL_R=\mathrm{Sym}^\#\big(\bigoplus_{\chi\in\Irr(G)} R\cdot L_\chi\big)\ar[r]^{\hookrightarrow}&  \mathrm{Sym}^\#\big(\bigoplus_{\xi\in \Char(G)} \mathbb{C}\cdot L_\xi\big)\ar[r]^<<<<<<{sp}\ar@{..>}[d]&
\mathrm{Frac}(\mathbb{C}[\![s]\!])\\
&\Hom(\Char(G),\mathrm{Frac}(\mathbb{C}[\![s]\!])^\times),}$$and putting $R=\mathbb{C}$ we get
$$\xymatrix{\AL_\mathbb{C}=\mathrm{Sym}^\#\big(\bigoplus_{\chi\in\Irr(G)} R\cdot L_\chi\big)\ar@{=}[r]\ar@{..>}[dr]&  \mathrm{Sym}^\#\big(\bigoplus_{\xi\in \Char(G)} \mathbb{C}\cdot L_\xi\big)\ar[r]^<<<<<<{sp}\ar@{..>}[d]&
\mathrm{Frac}(\mathbb{C}[\![s]\!])\\
&\Hom(\Char(G),\mathrm{Frac}(\mathbb{C}[\![s]\!])^\times).}$$We now have the set-up ready and we can form the associated (equivariant) hom-Lie algebra. 

Choose a $\sigma\in \Gam$. Extending linearly, this acts on the ring $\AL_R$ by postulating that $L_\chi^\sigma=L_{\chi^\sigma}$. In this way $\sigma$ becomes a ring automorphism on $\AL_R$. Now we can form the module
$$\AL_R\cdot \delta_\sigma=\AL_R\cdot a(\id-\sigma),\quad \text{for some $a\in R$}.$$ The natural choice for $a$ is obviously $1$, but in some cases it is necessary or desirable to have other $a$'s. 

Then by the previous theorem this can be endowed with a hom-Lie algebra structure: \begin{align}\label{eq:LhomLie}
\begin{split}
\llangle pL_\chi\cdot \delta_\sigma,qL_\vartheta\cdot \delta_\sigma\rrangle:&=\big(p^\sigma L_\chi^\sigma L_\vartheta-q^\sigma L_\vartheta^\sigma L_\chi\big)\cdot\delta_\sigma\\
&=\big(p^\sigma L_{\sigma\circ\chi+\vartheta}-q^\sigma L_{\sigma\circ \vartheta+\chi}\big)\cdot\delta_\sigma, \quad p,q\in R.
\end{split}
\end{align} Notice that we can let $\sigma$ act on $R$ if this is what we want ($R$ could be the realization field $E$ for instance). Letting $\sigma$ vary over $\Gam$ we get the associated $\Gam$-equivariant hom-Lie algebra. 

In addition, we note that the defined (equivariant) structure is only formal as we are simply playing with symbols having the same properties (actually only one such property, which is all we need) as Artin $L$-functions. We want to treat them like \emph{functions}, and then it is reasonable to use the map  $sp: \AL_R\to \mathrm{Frac}(\mathbb{C}[\![s]\!])$. Applying $sp$ to (\ref{eq:LhomLie}) yields, with $p,q\in R$ again,
\begin{align}\label{eq:LhomLiefunc}
\begin{split}
\llangle pL(\chi,s)\cdot \delta_\sigma,qL(\vartheta,s)\cdot \delta_\sigma\rrangle:&=\big(p^\sigma L(\chi,s)^\sigma L(\vartheta,s)-q^\sigma L(\vartheta,s)^\sigma L(\chi,s)\big)\cdot\delta_\sigma\\
&=\big(p^\sigma L(\sigma\circ\chi+\vartheta,s)-q^\sigma L(\sigma\circ \vartheta+\chi,s)\big)\cdot\delta_\sigma.
\end{split}
\end{align}This is still not quite correct as the $L(\chi,s)$ as functions are independent on the choice of Taylor expansion (i.e., the choice of base point for the expansion), but let's ignore this point also in this exposition. This can clearly be made into correct statements. 

Let us denote the hom-Lie algebra given by (\ref{eq:LhomLiefunc}) as $\underline{h}\AL_{R}^{(s)}(\sigma)$, and the associated equivariant structure $\underline{h}\AL_{R}^{(s)}(\Gam)$. Notice that this is actually a functor from Galois realization fields for $K/k$ to equivariant hom-Lie algebras (which also can be made precise; at least I think so).

Ok, this is certainly interesting in itself (or at least \emph{I} think so), but it becomes even more interesting if we pass to special values. We denote the leading coefficient in the Taylor expansion of $L(\chi,s)$ by $L^\ast(\chi,s)$ as is customary.

The operation $(?)^\ast :\mathrm{Frac}(\mathbb{C}[[s]])\to \mathbb{C}$ is a ring morphism and extends uniquely to a morphism of the associated hom-Lie algebra $\underline{h}\AL_R^{(s)}(\sigma)$ (we fix a $\sigma$ for simplicity). Therefore, there is an induced hom-Lie algebra structure on the set of leading coefficients of all Artin $L$-functions attached to $K/k$. We denote this by $h{\AL}^{(s),\ast}_R(\sigma)$. 

In addition, since Artin root numbers $W(\chi)$ satisfies the same relation $W(\chi+\vartheta)=W(\chi)W(\vartheta)$, the same applies to these and we can form $\underline{h}\mathbf{W}^{(s)}_R(\sigma)$. 

I find this construction rather remarkable since it incorporates all special values of the $L$-functions into one algebraic structure which is very similar to Lie algebras. I would say that this merits some further study (ideally be someone more knowledgeable about special values than me; such a person shouldn't be too hard to find, but getting him/her interested in this construction might prove more difficult). 

\begin{example}Let $D$ be a group of Dirichlet characters $\chi: (\mathbb{Z}/m)^\times\to \mathbb{C}$ and let $K_D$ be the associated field. The degree of $K_D$ is $n:=|D|$ and $\Gal(K_D/\mathbb{Q})=D$. Put $E:=\im(D)$ and assume that $E/\mathbb{Q}$ is Galois with group $\Gam$. 

The associated Dirichlet $L$-functions $L_D(\chi,s)$ are Artin $L$-functions of the field $K_D/\mathbb{Q}$. Therefore, $L_D(\chi+\vartheta,s)=L_D(\chi,s)L_D(\vartheta,s)$, for $\chi,\vartheta\in D$, and hence the above construction applies to $\underline{h}\AL_{D,R}^{(s)}(\sigma)$, $\sigma\in\Gam$, the space of Dirichlet $L$-functions associated to $D$, giving a product as in (\ref{eq:LhomLiefunc}). 

If $s=1-n$, $n\geq 1$, we can give a direct and explicit formula involving generalized Bernoulli numbers. In fact, directly from the definition and by \cite[Theorem 4.2]{Washington} we get
\begin{multline*}\big(\llangle L_D(\chi,1-n)\cdot\delta_\sigma,L_D(\vartheta,1-n)\cdot\delta_\sigma\rrangle\big)^\ast\\
=\big(L_D(\sigma\circ\chi,1-n)L(\vartheta,1-n)-L_D(\sigma\circ\vartheta,1-n)L_D(\chi,1-n)\big)^\ast\\
=-\frac{B_{n,\sigma\circ\chi}}{n}\frac{B_{n,\vartheta}}{n}+\frac{B_{n,\sigma\circ\vartheta}}{n}\frac{B_{n,\chi}}{n}
=\frac{B_{\sigma\circ\vartheta}B_{n,\chi}-B_{n,\sigma\circ\chi}B_{n,\chi}}{n^2},
\end{multline*}where $B_{n,\chi}$ are the generalized Bernoulli numbers defined in \cite[p. 31]{Washington}.
\end{example}

\subsection{Galois modules}This is much simpler than the previous example. 

Let $M$ be a finitely generated module over the $k$-algebra $A$. We can always construct an $A$-algebra structure on $M$, which we denote $M^a$ for emphasis. For instance, we could always take $\mathrm{Sym}(M)=\mathrm{Sym}_A(M)$. Assume such an algebra structure is given and let the group $G$ (no finiteness-assumption) act on $A$ and $M$ linearly over $k$ such that the action on $A$ and $M$ is compatible in the usual sense:
$$\sigma(am)=\sigma(a)\sigma(m), \quad a\in A, \,\,\, m\in M.$$ The action of $G$ on $A$ may be trivial. 

We can now form the equivariant hom-Lie algebra structure
$$\underline{h} M(G):=\bigsqcup_{\sigma\in G}\underline{h}M(\sigma):=\bigsqcup_{\sigma\in G} M^a\cdot \delta_{a_\sigma}, \quad \delta_{a_\sigma}:=a_\sigma(\id-\sigma), \,\, a_\sigma\in A.$$ The product is of course once again given by Theorem \ref{thm:twistprod}. Clearly, $\underline{h}M(G)$ includes all the structure that the original $G$-representation does, at least when $M^a=\mathrm{Sym}(M)$. In fact, it includes more: it encodes the \emph{relative} action of $\sigma\in G$ on the elements of $M$. When the algebra structure $M^a$ is different, other hom-Lie algebra structure emerge, arranging the information of the original $G$-representation in more sophisticated ways. \emph{However}, let me point out that it might be very difficult to construct $M^a$ since it has to be compatible with the original action of $G$. 

\begin{example}
 Take the module $M$ to be 
 $$M=\bigoplus_{i=1}^n \mathbb{Z}\ep_i,$$ the free $\mathbb{Z}$-module of rank $n$. Then we can form the localization $$M^a_{(\underline{\ep})}=\mathrm{Sym}(M)_{(\underline{\ep})}=A[\mathbb{Z}^{\oplus n}]=A[\ep_1^{\pm 1},\dots,\ep_n^{\pm 1}],$$ the $A$-algebra over Laurent polynomials over $A$. 
 
There is a more or less complete classification of the hom-Lie algebra structures appearing in this context by computations in \cite{HaLaSi}.
\end{example}This construction can be used as in the following example. 
\begin{example}
Take $K$ to be a field with three fundamental units $\ep_1$, $\ep_2$, $\ep_3$, let $\mu(K)$ denote the group of roots of unity in $K$. Then put $$A=\mathbb{Z}[\mu(K)][\ep_1^{\pm 1},\ep_2^{\pm 1},\ep_3^{\pm 1}].$$ We introduce the notation $\underline{x}:=(x_1,x_2,x_3)$ for any vector. 

Put $\sigma(\ep_i):=\zeta_i\ep_1^{s_{i1}}\ep_2^{s_{i2}}\ep_3^{s_{i3}}$, with $\zeta_i\in\mu(K)$.  So $\sigma$ can be represented by the matrix
$$\Sigma:=(s_{ij})=\begin{pmatrix}
	s_{11} & s_{12} & s_{13}\\
	s_{21} & s_{22} & s_{23}\\
	s_{31} & s_{32}	& s_{33}
\end{pmatrix}$$ together with the triple $\underline{\zeta}:=(\zeta_1,\zeta_2,\zeta_3)$. We consider $\sigma$ acting on $A$ over $\mathbb{Z}[\mu(K)]$ (to simplify things; there is no need to do so in principle) so in fact we look at $$A=\mathbb{Z}[\mu(K)]\otimes_\mathbb{Z}\mathbb{Z}[\ep_1^{\pm 1},\dots, \ep_n^{\pm 1}],$$ with $\sigma=1\otimes\sigma$. 

A common divisor on $A$ is given by $$g:=\omega^{-1}\underline{\ep}^{\underline{g}}, \quad \underline{g}:=(g_1,g_2,g_3)$$ and therefore, by Theorem \ref{thm:UFD}, a $\sigma$-twisted derivation is given by 
$$\delta_\sigma:=\omega\underline{\ep}^{-\underline{g}}(\id-\sigma),$$ and this generates a submodule $A\cdot\delta_\sigma\subseteq\mathrm{Der}_\sigma(A)$. The notation $\underline{\ep}^{\underline{g}}$ obviously mean $\ep_1^{g_1}\ep_2^{g_2}\ep_3^{g_3}$, and we use this notation throughout. 

We need some more notation before we can state the result. Put
\begin{align*}
(\underline{y})_r&:=s_{1r}y_1+s_{2r}y_2+s_{3r}y_3\\
 d_1&:=(s_{11}-1)g_1+s_{21}g_2+s_{31}g_3\\
 d_2&:=s_{12}g_1+(s_{22}-1)g_2+s_{32}g_3\\ 
 d_3&:=s_{13}g_1+s_{23}g_2+(s_{33}-1)g_3 \\
{D}_{\underline{\ell}}&:=-\underline{\ep}^{\underline{\ell}}\delta_\sigma.
\end{align*}

For this data to define a hom-Lie algebra we need that $d_1=d_2=d_3=0$ (see \cite[p.351]{HaLaSi}; but beware that the notation is slightly different there) and this can be expressed as
$$(\Sigma-\id)^\mathrm{t}\underline{g}^{\mathrm{t}}=0\quad\Longleftrightarrow\quad\begin{pmatrix}
	s_{11}-1& s_{12} & s_{13}\\
	s_{21} & s_{22}-1 & s_{23}\\
	s_{31} & s_{32}	& s_{33}-1
\end{pmatrix}^{\mathrm{t}}
\begin{pmatrix}
g_1\\
g_2\\
g_3\end{pmatrix}=0.$$With some work one can compute
\begin{align*}
\llangle D_{\underline{k}},D_{\underline{\ell}}\rrangle &= \omega\big(\underline{\zeta}^{\underline{\ell}}D_{(\ell)_1+k_1-g_1,(\ell)_2+k_2-g_2,(\ell)_3+k_3-g_3}
\\
&\qquad -
\underline{\zeta}^{\underline{k}}D_{(k)_1+\ell_1-g_1,(k)_2+\ell_2-g_2,(k)_3+\ell_3-g_3}\big)\\
&=\omega\big(\underline{\zeta}^{\underline{\ell}}\ep_1^{(\ell)_1+k_1-g_1}\ep_2^{(\ell)_2+k_2-g_2}\ep_3^{(\ell)_3+k_3-g_3}\\
&\qquad-\underline{\zeta}^{\underline{k}}\ep_1^{(k)_1+\ell_1-g_1}\ep_2^{(k)_2+\ell_2-g_3}\ep_3^{(k)_3+\ell_3-g_3}\big)\delta_\sigma.
\end{align*}
 
Notice that this algebra structure has infinite rank over $\mathbb{Z}[\mu(K)]$, but obviously rank one over $\mathbb{Z}[\mu(K)][\underline{\ep}^{\pm 1}]$. 

Now, this obviously becomes very complicated in this generality with a generic Galois action (not to mention with more generators), so let us specialize to an explicit example. So let us take 
$$\sigma(\ep_1):=\zeta_1\ep_2,\,\,\, \sigma(\ep_2)=\zeta_2\ep_1^{-1},\,\,\, \sigma(\ep_3)=\zeta_3\ep_3\quad\Longleftrightarrow\quad
\Sigma=\begin{pmatrix}
0 & 1 & 0\\
-1 & 0 & 0\\
0 & 0 & 1
\end{pmatrix}$$In this case it is easy to see that a greatest common divisor is in fact an element in $\mathbb{Z}[\mu(K)]$ so $\delta_\sigma=(\id-\sigma)$ generates the whole $\Der_\sigma(A)$ as an $A$-module. From this follows that we must have $g_1=g_2=g_3=0$, and hence $d_1=d_2=d_3=0$. Also, we have that $(x)_1=-x_2$, $(x)_2=x_1$ and $(x)_3=x_3$. 

Putting all this into the above formula for $\llangle D_{\underline{k}},D_{\underline{\ell}}\rrangle$ we get 
\begin{multline*}
\llangle D_{\underline{k}},D_{\underline{\ell}}\rrangle=\underline{\zeta}^{\underline{\ell}}D_{\ell_2+k_1,\ell_1+k_2,\ell_3+k_3}-\underline{\zeta}^{\underline{k}}D_{k_2+\ell_1,k_1+\ell_2,k_3+\ell_3}\\
=\big(\underline{\zeta}^{\underline{\ell}}\ep_1^{\ell_2+k_1}\ep_2^{\ell_1+k_2}\ep_3^{\ell_2+k_3}-
\underline{\zeta}^{\underline{k}}\ep_1^{k_2+\ell_1}\ep_2^{k_1+\ell_2}\ep_3^{k_3+\ell_3}\big)\delta_\sigma.
\end{multline*}
Once again, notice that the algebra has infinite $\mathbb{Z}[\mu(K)]$-rank. It would therefore be interesting to study which (hom-Lie) subalgebras exist with finite $\mathbb{Z}[\mu(K)]$. 

Clearly, if $W$ is a subalgebra a necessary condition is that $\sigma(W)\subseteq W$. In this case, this can only happen for a module on the form 
$$W={y}\mathbb{Z}[\mu(K)][\ep_3^{\pm 1}]\cdot\delta_\sigma,\quad \text{where $y\in \mathbb{Z}[\mu(K)][\ep_3^{\pm 1}]$}.$$ Hence, $D_\ell=-\ep_3\delta_\sigma$ and we get a ($\mathbb{Z}$-graded!) product
\begin{equation}\label{eq:UnitWitt}\llangle D_k,D_\ell\rrangle =\big(\zeta_3^\ell-\zeta_3^k\big)D_{k+\ell}=
\big(\zeta_3^\ell-\zeta_3^k\big)\ep_3^{k+\ell}\cdot\delta_\sigma.
\end{equation}This is still infinite over $\mathbb{Z}[\mu(K)]$, but there is an interesting finite-rank subalgebra of this, isomorphic to a deformed $\mathfrak{sl}_2$ over $\mathrm{Frac}(\mathbb{Z}[\mu(K)])$. Actually, it is enough to invert $2$: the isomorphism can be defined over 
$\mathbb{Z}[\frac{1}{2}][\mu(K)]$ as we will see. 

Indeed, the $\mathbb{Z}[\mu(K)]$-submodule $$\mathbb{Z}[\mu(K)]D_{-1}\oplus\mathbb{Z}[\mu(K)]D_0\oplus\mathbb{Z}[\mu(K)]D_1$$ is closed under the product (\ref{eq:UnitWitt}):
$$\llangle D_{-1},D_0\rrangle=(1-\zeta_3^{-1})D_{-1},\quad \llangle D_0,D_1\rrangle=(\zeta_3-1)D_1,\quad \llangle D_1,D_{-1}\rrangle=(\zeta_3^{-1}-\zeta_3)D_0.$$

Now, we make the following change of basis
$$D_0\to a^{-1}B_0,\quad D_1 \to b^{-1}B_1,\quad D_{-1}\to c^{-1}B_{-1}.$$ A priori we need to invert $a, b, c$, but we will see that all these can be chosen to be units in $\mathbb{Z}[\mu(K)]$. We compute
\begin{align*}
\llangle B_1,B_0\rrangle &=\llangle bD_1,aD_0\rrangle =ab\llangle D_1,D_0\rrangle = -ab(\zeta-1)D_1\\
\llangle B_{-1},B_0\rrangle &=\llangle cD_{-1},aD_0\rrangle =ac\llangle D_{-1},D_0\rrangle =ac(1-\zeta^{-1})D_{-1}\\
\llangle B_1,B_{-1}\rrangle &=\llangle bD_1,cD_{-1}\rrangle =bc\llangle D_1,D_{-1}\rrangle = bc(\zeta^{-1}-\zeta)D_0.
\end{align*}
What we want to arrive at is 
$$\llangle B_1,B_0\rrangle = 2B_1,\quad \llangle B_{-1},B_0\rrangle = -2qB_{-1},\quad \llangle B_1,B_{-1}\rrangle = \frac{q+1}{2}B_0,$$ for reasons that we be clear in a short while. Here $q$ is an element in $\mathrm{Frac}(\mathbb{Z}[\mu(K)])$ that we want to determine. Therefore, we need to find $a,b,c$ and $q$ such that the transformation exists as we want it to be. 

From the above we find that 
\begin{align*}
	-b(2+a(\zeta-1)=0&\Longrightarrow a=-\frac{2}{\zeta-1}\\
	c(-2q+a(1-\zeta^{-1})=0&\Longrightarrow q=\frac{\zeta^{-1}-1}{\zeta-1}\\
	bc(\zeta^{-1}-\zeta)=\frac{q+1}{2}a&\Longrightarrow bc(\zeta^{-1}-\zeta)=\frac{1}{2}(1-\zeta)\zeta^{-1}\frac{2}{1-\zeta}
	\Longrightarrow bc=\frac{1}{1-\zeta^2}.
\end{align*}(Cyclotomic units!) We can thus choose $b=1$, for instance, giving $c=(1-\zeta^2)^{-1}$, and the isomorphism is established. 

The reason for the change of basis is that the resulting relations are exactly the relations defining a certain $q$-deformation of $\mathfrak{sl}_2$ given in \cite[Example 3.2]{LaSi}. This algebra has some remarkable properties which can be summarized in the following (incomplete) list:
\begin{itemize}
	\item[-] it has a ``universal enveloping algebra'' and a Poincar\'e--Birkhoff--Witt basis;
	\item[-] it is an iterated Ore extension: and as a result has some nice homological properties such as being Auslander-regular and Koszul of Gel'fand--Kirillov dimension 3;
	\item[-] it is isomorphic to a so-called ``down-up algebra'';
	\item[-] homogenizing the defining relations by a central element gives an Artin--Schelter regular algebra and hence a well-established (projective) non-commutative geometry (see the references in \cite{LaSi}).
\end{itemize}
For all this see \cite[Example 3.2]{LaSi}.

\end{example}
Generalizing the above we can state the following theorem:
\begin{thm}Suppose there is a $\sigma$-stable rank-one submodule of $M^a=R[\ep_1^{\pm 1},\dots, \ep_n^{\pm 1}]$, i.e., that $\sigma(R\ep_i)\subseteq R\ep_i$, and that $R^\sigma=R$ then there is a $3$-dimensional hom-Lie subalgebra of $M^a\cdot\delta_\sigma$ which is isomorphic to  a deformed $\mathfrak{sl}_2$ with the above relations and subsequent properties stated above.
\end{thm}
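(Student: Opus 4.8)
The plan is to reduce the general statement to the explicit computation already carried out in the preceding example. Given a $\sigma$-stable rank-one submodule $R\ep_i\subseteq M^a$ with $\sigma(R\ep_i)\subseteq R\ep_i$ and $R^\sigma=R$, write $\sigma(\ep_i)=\zeta\ep_i$ for some $\zeta\in R^\times$ (the stability forces the image to be an $R$-multiple of $\ep_i$, and since $\sigma$ is an algebra automorphism the multiplier must be a unit). First I would observe that $R[\ep_i^{\pm1}]\cdot\delta_\sigma$ is a hom-Lie subalgebra of $M^a\cdot\delta_\sigma$: it is visibly $\sigma$-stable, and by Theorem~\ref{thm:twistprod}(i) the bracket $\llangle a\ep_i^j\cdot\delta_\sigma, b\ep_i^k\cdot\delta_\sigma\rrangle = (\sigma(a\ep_i^j)\partial(b\ep_i^k)-\sigma(b\ep_i^k)\partial(a\ep_i^j))\cdot\delta_\sigma$ lands back in $R[\ep_i^{\pm1}]\cdot\delta_\sigma$ once one checks $\partial(\ep_i^k)=\ep_i^k - \zeta^k\ep_i^k \in R[\ep_i^{\pm1}]$, using $\partial=\id-\sigma$ (after absorbing the gcd, which is a unit here, as in the worked example). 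This produces exactly the $\Z$-graded bracket $\llangle D_k,D_\ell\rrangle=(\zeta^\ell-\zeta^k)D_{k+\ell}$ of equation~(\ref{eq:UnitWitt}), with $D_k:=-\ep_i^k\cdot\delta_\sigma$.

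Next I would extract the three-dimensional piece exactly as before: the submodule $RD_{-1}\oplus RD_0\oplus RD_1$ is closed under the bracket (the only nontrivial check is that $\llangle D_1,D_{-1}\rrangle$ is a multiple of $D_0$, which it is, with coefficient $\zeta^{-1}-\zeta$), giving the relations $\llangle D_{-1},D_0\rrangle=(1-\zeta^{-1})D_{-1}$, $\llangle D_0,D_1\rrangle=(\zeta-1)D_1$, $\llangle D_1,D_{-1}\rrangle=(\zeta^{-1}-\zeta)D_0$. Then I would perform the same rescaling $D_0\mapsto a^{-1}B_0$, $D_1\mapsto b^{-1}B_1$, $D_{-1}\mapsto c^{-1}B_{-1}$ with $a=-2/(\zeta-1)$, $b=1$, $c=(1-\zeta^2)^{-1}$, $q=(\zeta^{-1}-1)/(\zeta-1)$, landing on the target relations $\llangle B_1,B_0\rrangle=2B_1$, $\llangle B_{-1},B_0\rrangle=-2qB_{-1}$, $\llangle B_1,B_{-1}\rrangle=\tfrac{q+1}{2}B_0$. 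This is the deformed $\mathfrak{sl}_2$ of \cite[Example 3.2]{LaSi}, and the listed properties are inherited verbatim from that reference.

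The main obstacle — really the only thing requiring care — is the degenerate case $\zeta=1$ (equivalently $\sigma$ acting trivially on $R\ep_i$), where the scaling factors $a$ and $q$ blow up and the bracket~(\ref{eq:UnitWitt}) collapses to zero; here one either excludes this case by hypothesis or notes that the statement should be read as asserting an isomorphism over $\mathrm{Frac}(R)$ with $\zeta-1$ and $1-\zeta^2$ inverted, which requires $\zeta\neq\pm1$. A secondary point is justifying that the gcd of $(\id-\sigma)(M^a)$ can be taken to be a unit (so that $\delta_\sigma=\id-\sigma$ itself generates $\Der_\sigma$ on the relevant Laurent subring), which follows as in the worked example from $(\id-\sigma)(\ep_i)=(1-\zeta)\ep_i$ being a unit times $\ep_i$ together with $(\id-\sigma)$ vanishing on a nonzero constant. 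Modulo these caveats, the argument is a direct transcription of the computation preceding the theorem, so I would keep the proof short and refer back to it.
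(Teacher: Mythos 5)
Your proof follows exactly the route the paper intends: the paper's entire proof is the one-line remark ``The proof is essentially given in the example by direct generalization,'' and you have spelled that generalization out correctly — extracting $\sigma(\ep_i)=\zeta\ep_i$ from $\sigma$-stability, restricting to the graded piece $R[\ep_i^{\pm1}]\cdot\delta_\sigma$, obtaining the Witt-like bracket of equation~(\ref{eq:UnitWitt}), truncating to $RD_{-1}\oplus RD_0\oplus RD_1$, and rescaling to the deformed $\mathfrak{sl}_2$ relations. Your two caveats are both legitimate and in fact sharpen the paper's statement: the theorem is silently assuming $\zeta\neq\pm1$ (otherwise the bracket degenerates, $q$ is undefined, or $1-\zeta^2$ fails to be invertible), and the isomorphism only exists after inverting $\zeta-1$ and $1-\zeta^2$ (which in the paper's unit-group example are cyclotomic units, explaining why $\mathbb{Z}[\tfrac12][\mu(K)]$ suffices there, but which is not automatic in the generality of this theorem).
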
The proof is essentially given in the example by direct generalization. 

\begin{corollary}This applies in particular to algebras defined by fundamental units as in the previous example.
\end{corollary}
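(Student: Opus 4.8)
The plan is to reduce the general statement to the explicit computation already carried out in the preceding example. The hypothesis is that we have a rank-one $\sigma$-stable submodule of $M^a = R[\ep_1^{\pm 1},\dots,\ep_n^{\pm 1}]$, meaning $\sigma(R\ep_i)\subseteq R\ep_i$ for each $i$, with $R^\sigma = R$. Concretely, $\sigma$-stability of $R\ep_i$ forces $\sigma(\ep_i) = u_i \ep_i$ for some $u_i\in R^\times$ (the exponent vector must be the $i$-th standard basis vector, else the image leaves $R\ep_i$). So the relevant situation is exactly the diagonal one: in the matrix notation of the example, $\Sigma = \id$ and the twist is recorded entirely by the units $\underline{u} = (u_1,\dots,u_n)$, which play the role of $\underline{\zeta}$. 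First I would record that since $\Sigma = \id$, we may take $g = 1$ (a gcd of $(\id-\sigma)(A)$ that is already a unit, after possibly inverting nothing), so Theorem \ref{thm:UFD} gives $\delta_\sigma = \id - \sigma$ generating $\Der_\sigma(M^a)$, and the compatibility condition $(\Sigma-\id)^{\mathrm t}\underline g^{\mathrm t}=0$ is automatic; thus Theorem \ref{thm:twistprod} applies and $M^a\cdot\delta_\sigma$ is a hom-Lie algebra with $q = 1$ coming from \eqref{eq:qsigmapartial}.

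Next I would restrict attention to the $\sigma$-stable rank-one submodule. Fixing the index $i$ for which $R\ep_i$ is the chosen stable line, the submodule $R[\ep_i^{\pm 1}]\cdot\delta_\sigma$ is closed under $\llangle\,\cdot,\cdot\,\rrangle$ because $\sigma$ preserves it and $R^\sigma = R$; setting $D_m := -\ep_i^m\delta_\sigma$ and abbreviating $\zeta := u_i$, the product formula from Theorem \ref{thm:twistprod}(i) collapses — exactly as in \eqref{eq:UnitWitt} — to the $\Z$-graded bracket $\llangle D_k, D_\ell\rrangle = (\zeta^\ell - \zeta^k) D_{k+\ell}$. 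Then I would isolate the three-dimensional submodule $R D_{-1}\oplus R D_0\oplus R D_1$, verify directly that it is closed (the three brackets $\llangle D_{-1},D_0\rrangle$, $\llangle D_0,D_1\rrangle$, $\llangle D_1,D_{-1}\rrangle$ land back in it, their third-index partners $D_{-1}, D_1, D_0$ respectively being in range), and then perform the change of basis $D_0 \mapsto a^{-1}B_0$, $D_1\mapsto b^{-1}B_1$, $D_{-1}\mapsto c^{-1}B_{-1}$ with $a = -2/(\zeta-1)$, $b = 1$, $c = (1-\zeta^2)^{-1}$ and $q = (\zeta^{-1}-1)/(\zeta-1)$, reproducing the target relations $\llangle B_1,B_0\rrangle = 2B_1$, $\llangle B_{-1},B_0\rrangle = -2qB_{-1}$, $\llangle B_1,B_{-1}\rrangle = \tfrac{q+1}{2}B_0$. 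These are the defining relations of the deformed $\mathfrak{sl}_2$ of \cite[Example 3.2]{LaSi}, so the stated isomorphism and the list of structural properties follow by citation.

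The one genuine point requiring care — the main obstacle — is that the change of basis, and hence the isomorphism, requires $\zeta - 1$ and $1 - \zeta^2$ to be invertible in $R$ (or at least in $\mathrm{Frac}(R)$); if $\zeta = 1$ then $\sigma$ acts trivially on the line and the bracket degenerates to zero, so one must assume $\zeta\neq 1$, and to stay integral (as in the example, over $\Z[\tfrac12][\mu(K)]$) one wants $\zeta$ a root of unity with $1-\zeta^2$ a unit, i.e. a cyclotomic-unit situation. I would therefore state the theorem's conclusion as living over $\mathrm{Frac}(R)$ in general, noting the integral refinement when the $u_i$ are suitable cyclotomic units — which is precisely why the corollary specializing to fundamental-unit algebras holds: there $R = \Z[\mu(K)]$ (or $\Z[\tfrac12][\mu(K)]$) and the relevant $1-\zeta^2$ is a cyclotomic unit. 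The remaining verifications are the routine bracket computations already displayed in the example, so I would simply say "the proof is by direct generalization of the computation in the preceding example," filling in only the observation that $\sigma$-stability of $R\ep_i$ forces the diagonal form $\sigma(\ep_i) = u_i\ep_i$.
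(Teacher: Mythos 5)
Your proposal contains a material misreading right at the start that, taken literally, would make the corollary \emph{false}. You gloss the theorem's hypothesis as ``$\sigma(R\ep_i)\subseteq R\ep_i$ for each $i$'' and conclude ``so the relevant situation is exactly the diagonal one: $\Sigma = \id$.'' But the preceding example — the one the corollary cites — has
$$\Sigma=\begin{pmatrix}0 & 1 & 0\\ -1 & 0 & 0\\ 0 & 0 & 1\end{pmatrix}\neq\id,$$
with only the single line $R\ep_3$ being $\sigma$-stable. The theorem's hypothesis is existential (there is \emph{some} $i$ with $R\ep_i$ $\sigma$-stable), not universal; if it were universal, the example would not be an instance of the theorem and the corollary would be vacuous or wrong. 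Your subsequent claim that the compatibility condition $(\Sigma-\id)^{\mathrm t}\underline g^{\mathrm t}=0$ is ``automatic'' inherits this error: in the example it holds only because the explicit $\Sigma$ forces the gcd to be a unit, hence $\underline g = 0$, which is a verification, not a triviality.

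Fortunately, the rest of your argument quietly reverts to the correct reading: you fix \emph{one} index $i$ with $R\ep_i$ stable, observe (correctly) that stability forces $\sigma(\ep_i)=u_i\ep_i$ with $u_i\in R^\times$, restrict to $R[\ep_i^{\pm 1}]\cdot\delta_\sigma$, obtain the graded bracket $\llangle D_k,D_\ell\rrangle=(\zeta^\ell-\zeta^k)D_{k+\ell}$ with $\zeta:=u_i$, and carry out the three-term change of basis. That part is sound and matches the example's computation. Your final observation — that for fundamental-unit algebras one has $R=\Z[\mu(K)]$ (or $\Z[\tfrac12][\mu(K)]$ after inverting $2$) and the required quantity $1-\zeta^2$ is a cyclotomic unit, so the isomorphism is defined integrally — is exactly the reason the corollary holds, and it is the content the paper leaves implicit.

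Note also that the paper offers no separate proof of this corollary; the preceding example \emph{is} the proof, and the theorem is stated as an abstraction of it (``the proof is essentially given in the example''). Your proposal therefore does more work than the paper: it essentially re-proves the theorem with explicit attention to where $\zeta-1$ and $1-\zeta^2$ need to be invertible, which is a legitimate and clarifying refinement — but the opening two sentences should be corrected so that the hypothesis reads ``there exists an index $i$'' and the case $\Sigma\neq\id$ is not excluded.
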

\subsection{Iwasawa theory}

Another example, as an application of the previous section, which should also be very interesting to study, are modules over Iwasawa algebras. I want to point out from the beginning that I really am not especially competent in the fine details of Iwasawa theory. In fact I know very little, but the ingredients in the construction below are rather general so I think I got this part right at least. 

The idea here is simply an adaptation of the previous section when $A=\mathfrak{o}_K$ and $\Gam\simeq \mathbb{Z}_p$ acts on a $\mathfrak{o}_K$-module $M$. 

But let us briefly recall what the notations here are, just to be clear. Let $K$ be a finite unramified extension of $\mathbb{Q}_p$ (or $\mathbb{Q}$ for that matter) with rings of integers $\mathfrak{o}_K$ and consider the cyclotomic tower
$$K\subset K(\mu_p)\subset K(\mu_{p^2})\subset \cdots\subset K(\mu_{p^n})\subset \cdots\subset K(\mu_{p^\infty}).$$ As usual $\mu_{p^n}$ denotes the $p^n$th roots of unity in the algebraic closure $K^{\mathrm{alg}}$. The Galois group is isomorphic to $\mathbb{Z}_p^\times=(\mathbb{Z}/p)^\times\times \mathbb{Z}_p$, where $\mathbb{Z}_p$ is the additive group of $p$-adic integers. Put $K_\infty=K(\mu_{p^\infty})^{(\mathbb{Z}/p)^\times}$. Then $\Gam:=\Gal(K_\infty/K)$. 

So, let $M$ be a $\Gam$-representation over $\mathfrak{o}_K$. This means in particular that $M$ is a module over the Iwasawa algebra $\Lambda=\mathfrak{o}_K[\![\Gam]\!]$. It is well-known that $\Lambda$ is generated by a non-canonical topological generator $\gamma\in \Gam$. In addition there is an isomorphism $\Lambda\simeq \mathfrak{o}_K[\![T]\!]$ via $\id-\gamma\mapsto T$. Clearly, $T$ acts on any Iwasasa module $M$ as $\id-\gamma$ and hence for $f\in \Lambda$ we have representations
$$ f=\sum_{i=0}^\infty f_i T^i\quad \longleftrightarrow\quad f=\sum_{i=0}^\infty f_i(\id-\gamma).$$ Therefore we see that the first order terms $f_1$ are in fact $\gamma$-derivations! 

To have more flexibility we can generalize the situation a bit as follows. Let $B$ be an $\mathfrak{o}_K$-algebra endowed with a $\Gam$-action. Tensoring with $B$ we get a semi-linear action of $B[\![\id-\gamma]\!]=B\otimes_{\mathfrak{o}_K} \Lambda$ on $M_B:=B\otimes_{\mathfrak{o}_K} M$. Then every $b(\id-\gamma)$ is a $\gamma$-derivation on $M_B$.

Now, forming (for instance) $\mathrm{Sym}_B(M_B)$ and extending the $\Gam$ action in the obvious way, we can form the hom-Lie algebra structure
$$\underline{h}M_B(\gamma):=\mathrm{Sym}_B(M_B)\cdot \delta_{b_\gamma}, \quad \delta_{b_\gamma}:=b_\gamma(\id-\gamma), \quad b_\gamma\in B.$$
\emph{However}, I want to emphasize that this is not canonical since $\gamma$ is not really canonical. On the other hand the equivariant structure
$$\underline{h}\mathrm{Sym}(M_B)(\Gam):=\bigsqcup_{\gamma\in \Gam}\underline{h}M(\gamma):=\bigsqcup_{\gamma\in \Gam} \mathrm{Sym}(M_B)\cdot \delta_{b_\gamma}$$actually \emph{is} canonical (and huge!). 

The classification of Iwasawa modules is well-established. Namely, if $M$ is a $\Lambda$-module then 
$$M\overset{\mathrm{pseudo}}{\thicksim}\Lambda^r\oplus \bigoplus_{i=1}^s\frac{\Lambda}{p^{m_i}}\oplus\bigoplus_{j=1}^t\frac{\Lambda}{F_j^{n_j}},$$where $\overset{\mathrm{pseudo}}{\thicksim}$ means isomorphism up to finite kernels and cokernels (a so-called pseudo-isomorphism); the $F_j$ are unique irreducible polynomials (of a certain kind). The numbers $s,t, m_i, n_j$ are also unique. This means that the resulting (equivariant) hom-Lie algebras arising from Iwasawa theory could in principle be classified (up to finite kernels and cokernels of the underlying modules). 

Often one encounters set-ups where one is interested in a special element inside $K_1(R)$ for some ring $R$. The group $K_1(R)$ is generated by isomorphism classes of symbols $[P,\theta]$ where $P$ is a finitely generated projective $R$-module and $\theta$ is an automorphism on $P$. An isomorphism of symbols $[P,\theta_P]\to [Q,\theta_Q]$ is an isomorphism of modules $f: P\simeq Q$ such that $f\circ \theta_P=\theta_Q\circ f$. 

It is easy to see that every isomorphism class of symbols $[P,\theta]$ defines a \emph{canonical} (and up to multiples, unique) hom-Lie algebra structure over $\mathrm{Sym}(P)$, namely 
$$\underline{h}\mathrm{Sym}(P)(\theta):=\mathrm{Sym}(P)\cdot\delta_\theta,\quad \delta_\theta:=\id-\theta.$$

In the case of Iwasawa theory, the ring $R$ is some generalization of the Iwasawa algebra $\Lambda$. For instance, take a (finite or infinite) Galois extension $L$ of $K_\infty$ with (not necessarily commutative) Galois group  $\Gam^G$. Then $R$ often is $\mathfrak{o}_K[\![\Gam^G]\!]$, or the fraction field of this ring. Hence $K_1(\mathfrak{o}_K[\![\Gam^G]\!])$ is generated as a set by isomorphism classes of symbols $[P,\theta]$, where $P$ is a finitely generated and projective $\mathfrak{o}_K[\![\Gam^G]\!]$-module. 

\section{Hom-Lie algebras, $(\phi,\Gamma)$-modules and twisted derivations}
\subsection{Difference modules}\label{sec:differencemodules}
\begin{dfn}A \emph{$\sigma$-difference ring} is a ring $A$ together with a $\sigma\in\End(A)$; a \emph{$(\Phi,\sigma)$-difference module} $(M,\Phi,\sigma)$ is a module over a difference ring $(A,\sigma)$ together with a $\sigma$-linear endomorphism $\Phi$.
\end{dfn}
To recall, the notion $\sigma$-linear means that 
$$\Phi(am)=\sigma(a)\Phi(m),\quad \text{for}\,\, a\in A,\,\, m\in M.$$
\begin{prop}Every $\sigma$-difference operator $\sum_i a_i\sigma^i$ over a $\sigma$-difference ring $(A,\sigma)$ can uniquely be expressed as a $\sigma$-differential operator $\sum_i c_i\partial_\sigma$, where $\partial_\sigma:=\id-\sigma$, and vice versa.
\end{prop}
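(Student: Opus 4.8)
The plan is to realize \emph{both} kinds of operators as elements of one fixed ring and then to exhibit the passage between the two presentations as an explicit, invertible change of left-$A$-basis. First I would fix the ambient object: let $\mathcal{D}=A[x;\sigma]$ be the skew polynomial ring, i.e. the free left $A$-module on symbols $\{x^i\}_{i\ge 0}$ with multiplication determined by $x\cdot a=\sigma(a)\cdot x$ for $a\in A$; this is a ring precisely because $\sigma\in\End(A)$ is an algebra endomorphism, and a $\sigma$-difference operator $\sum_i a_i\sigma^i$ is by definition an element of $\mathcal{D}$ written in the left-$A$-basis $\{x^i\}$ (with $x^i$ playing the role of $\sigma^i$), so its coefficients $a_i$ are uniquely determined. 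Put $\partial_\sigma:=\id-x\in\mathcal{D}$. A one-line computation in $\mathcal{D}$ gives $\partial_\sigma\cdot a=a-\sigma(a)x=(a-\sigma(a))+\sigma(a)(\id-x)=\partial_\sigma(a)+\sigma(a)\,\partial_\sigma$, so $\partial_\sigma$ obeys the $\sigma$-twisted Leibniz rule \eqref{eq:Leibniz}; hence every $\sum_j c_j\partial_\sigma^{\,j}$ is a genuine $\sigma$-differential operator, and conversely any composite of such lies in $\mathcal{D}$. The whole statement therefore reduces to the claim that $\{\partial_\sigma^{\,j}\}_{j\ge 0}$ is \emph{also} a free left-$A$-basis of $\mathcal{D}$, with an explicit transition to $\{x^i\}_{i\ge 0}$.

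The core step is this change of basis. Since $\sigma=\id-\partial_\sigma$ and $\partial_\sigma=\id-\sigma$, and since the binomial coefficients are images of integers and hence central in $\mathcal{D}$, the binomial theorem applies verbatim:
\[
\sigma^i=(\id-\partial_\sigma)^i=\sum_{j=0}^{i}(-1)^j\binom{i}{j}\partial_\sigma^{\,j},\qquad
\partial_\sigma^{\,j}=(\id-\sigma)^j=\sum_{i=0}^{j}(-1)^i\binom{j}{i}\sigma^{i}.
\]
Both are finite sums, so each substitution turns a finite $A$-linear combination of $\sigma^i$'s into a finite $A$-linear combination of $\partial_\sigma^{\,j}$'s and back; explicitly $\sum_i a_i\sigma^i=\sum_j c_j\partial_\sigma^{\,j}$ with $c_j=(-1)^j\sum_{i\ge j}\binom{i}{j}a_i$, and inversely $a_i=(-1)^i\sum_{j\ge i}\binom{j}{i}c_j$. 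Restricted to the order-$\le n$ part of the natural filtration of $\mathcal{D}$, each transformation is given by a triangular matrix of central elements with diagonal entries $\pm1$, hence invertible over $\Z$; alternatively one checks directly, via the binomial inversion (Vandermonde) identity, that the two displayed substitutions compose to the identity. Either way $\{\partial_\sigma^{\,j}\}$ is a free left-$A$-basis of $\mathcal{D}$, which yields simultaneously the existence and uniqueness of both representations, together with the conversion formulas above. This settles the proposition and its ``vice versa''.

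I do not expect a serious obstacle. The one point that needs care is that the conversion be $A$-linear from the \emph{correct} (left) side: this is exactly why it matters that the binomial coefficients entering the expansions are central. The second point worth flagging is that one must \emph{not} appeal to faithfulness of the action on a module or to domain-type hypotheses on $\sigma$ (which may well fail to be injective, in which case $\{\sigma^i\}$ is no longer independent as a family of operators on $A$); working inside the skew polynomial ring $A[x;\sigma]$ rather than inside $\End_k(M)$ or $\End_k(A)$ is what makes the uniqueness statement unconditional.
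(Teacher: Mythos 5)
Your proof is correct, and at its core it runs on the same engine as the paper's: the binomial inversion between the families $\{\sigma^i\}$ and $\{\partial_\sigma^{\,j}\}$, with $\sigma=\id-\partial_\sigma$ and $\partial_\sigma=\id-\sigma$. The paper simply states the conversion formula as a combinatorial lemma (proved by induction, details deferred to a reference) and declares that the proposition ``follows immediately.'' What you add is a clean home for the operators and a precise reason why the representation is unique: you realize both kinds of operators inside the skew polynomial ring $A[x;\sigma]$, observe that $\{x^i\}$ is by construction a free left-$A$-basis, and then show that the substitution is a unitriangular (hence invertible over $\Z$) change of left-$A$-basis, so $\{\partial_\sigma^{\,j}\}$ is also a free basis. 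That gives existence, uniqueness, and the ``vice versa'' all in one stroke. Your remark that one must \emph{not} argue inside $\End_k(A)$ is a genuine improvement in rigor: if $\sigma$ is not injective (or more generally if $\{\sigma^i\}$ satisfies a nontrivial left-$A$-linear relation as endomorphisms), the uniqueness claim is simply false for operators, and only the formal skew-polynomial interpretation salvages the statement. The paper glosses over this. Incidentally, the lemma as printed in the paper ($\sigma^n=\sum_{i=0}^{n-1}\binom{n}{i}(\id-\sigma)^i+(-1)^n(\id-\sigma)^n$ and the ``binomial identity'' $(\id-\sigma)^n=\sum_{i=0}^n\binom{n}{i}\sigma^i$) has sign errors; your expansions $\sigma^i=\sum_{j}(-1)^j\binom{i}{j}\partial_\sigma^{\,j}$ and $\partial_\sigma^{\,j}=\sum_i(-1)^i\binom{j}{i}\sigma^i$ are the corrected versions.
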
The proposition follows immediately from the following (probably well-known) lemma.
\begin{lem}We have 
\begin{equation}\label{eq:combformula}\sigma^n=\sum_{i=0}^{n-1}\binom{n}{i}(\id-\sigma)^i+(-1)^n(\id-\sigma)^n.
\end{equation}
\end{lem}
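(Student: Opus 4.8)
The plan is to prove the identity \eqref{eq:combformula} by induction on $n$, treating $\id-\sigma$ as a single operator and working in the (commutative, since $\id$ is central) polynomial ring generated by $\sigma$ over the base ring. Write $\partial:=\id-\sigma$, so that $\sigma=\id-\partial$. Then the claim $\sigma^n=\sum_{i=0}^{n-1}\binom{n}{i}\partial^i+(-1)^n\partial^n$ is simply the assertion that $(\id-\partial)^n=\sum_{i=0}^{n}\binom{n}{i}(-1)^i\partial^i$ after one checks that the $i=n$ term $\binom{n}{n}(-1)^n\partial^n=(-1)^n\partial^n$ matches and the remaining terms $\binom{n}{i}(-1)^i\partial^i$ for $i<n$ must be reconciled with $\binom{n}{i}\partial^i$. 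This already signals the main point: the stated formula is only literally correct up to the signs $(-1)^i$ on the intermediate terms, so the honest thing to do is first expand $\sigma^n=(\id-\partial)^n$ by the ordinary binomial theorem — valid because $\id$ and $\partial$ commute — obtaining $\sigma^n=\sum_{i=0}^{n}(-1)^i\binom{n}{i}\partial^i$, and then observe that this is the intended content of \eqref{eq:combformula}.

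Concretely I would proceed as follows. First, note $\id$ commutes with $\sigma$ in $\End_k(A)$ (resp. $\End_k(M)$), hence with $\partial=\id-\sigma$, so the subring of $\End_k(A)$ generated by $\id$ and $\partial$ is commutative and the binomial theorem applies. Second, apply it: $\sigma^n=(\id-\partial)^n=\sum_{i=0}^{n}\binom{n}{i}\id^{\,n-i}(-\partial)^i=\sum_{i=0}^{n}(-1)^i\binom{n}{i}\partial^i$, which is the desired expansion of $\sigma^n$ as a $\sigma$-differential operator in $\partial_\sigma=\id-\sigma$. Third, for the Proposition, given a $\sigma$-difference operator $\sum_i a_i\sigma^i$, substitute this expansion of each $\sigma^i$ and collect coefficients of each power $\partial^j$; this expresses the operator as $\sum_j c_j\partial^j$ with $c_j=\sum_{i\ge j}(-1)^j\binom{i}{j}a_i$ (a finite sum when the original sum is finite). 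Conversely, using $\sigma=\id-\partial$ and expanding $\partial^j=(\id-\sigma)^j=\sum_{i=0}^j(-1)^i\binom{j}{i}\sigma^i$ by the same binomial theorem writes any $\sum_j c_j\partial^j$ back as $\sum_i a_i\sigma^i$. Uniqueness in both directions follows because these two change-of-basis transformations are inverse to each other: the transition matrices between $\{\sigma^i\}$ and $\{\partial^i\}$ (truncated to any finite degree $N$) are unipotent lower-triangular, hence invertible over $\Z$, so the passage is a bijection on the level of formal coefficient sequences.

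The only genuine subtlety — and the step I would flag as the potential obstacle — is bookkeeping rather than mathematics: one must be careful that "uniquely expressed" is a statement about the coefficients, and that it requires either restricting to operators of finite order (so that the triangular transition is a finite matrix) or, in the complete/topological setting of power series $\sum_{i=0}^{\infty}a_i\sigma^i$, checking that the rearrangement of the double sum $\sum_i a_i\sum_j(-1)^j\binom{i}{j}\partial^j$ converges and is legitimate in the relevant topology (for instance the $T$-adic topology on $\mathfrak{o}_K[\![T]\!]$ with $T=\id-\gamma$ as in the Iwasawa-theoretic application, where $\partial$ is topologically nilpotent). In the purely algebraic, finite-order case there is nothing to worry about and the induction/binomial argument above is complete; in the completed case one invokes continuity of the two maps and density of the finite-order operators to transport uniqueness. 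I would present the finite case as the main proof and remark on the completed case.
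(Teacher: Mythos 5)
Your proposal is correct, and importantly you have caught a genuine error in the statement: the lemma as printed is false. Testing $n=2$ shows the right-hand side evaluates to $4\id-4\sigma+\sigma^2\neq\sigma^2$. The correct identity is
\begin{equation*}
\sigma^n=\sum_{i=0}^{n}(-1)^i\binom{n}{i}(\id-\sigma)^i,
\end{equation*}
which is exactly what you derive. Note that the paper's one-line proof sketch is also affected: it begins from the claimed identity $(\id-\sigma)^n=\sum_{i=0}^n\binom{n}{i}\sigma^i$, which likewise omits the alternating signs $(-1)^i$. So the paper intends the same binomial-theorem argument you give, but has dropped the signs in both the statement and its starting point.

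As for method, the paper proposes expanding $(\id-\sigma)^n$ in powers of $\sigma$ and then inducting; you instead set $\partial=\id-\sigma$, observe $\sigma=\id-\partial$ with $\id$ central, and expand $\sigma^n=(\id-\partial)^n$ directly. Your route is cleaner: it yields the closed form in one step without an induction, while the paper's route needs the induction precisely because it starts from the expansion going the ``wrong way''. Your remarks on the Proposition (that the change of basis between $\{\sigma^i\}$ and $\{\partial^i\}$ is unipotent triangular over $\Z$, hence bijective on finite coefficient sequences, with a continuity/density argument needed in the completed setting) correctly identify the only point where the ``uniquely expressed'' clause could fail and are a worthwhile addition that the paper leaves implicit.
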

\begin{proof}The proof is entirely an exercise in combinatorics, starting from the binomial identity $$(\id-\sigma)^n=\sum_{i=0}^n\binom{n}{i}\sigma^i,$$ and using induction on $n$. For details, see \cite{Larsson3}.
\end{proof}

Now, let $(\mathscr{E},\Phi)$ be a $\sigma$-difference module over a $\sigma$-difference ring $(\mathscr{A},\sigma)$ on a scheme $X$. Furthermore, for each $U\subseteq X$ we put $\partial_{\sigma;U}:=t_U(\id-\sigma)$ for $t\in \mathscr{A}(U)$. By abuse of notation we don't explicitly mention that $\Phi$ and $\sigma$ should really be $\Phi_U$ and $\sigma_U$. 

Consider the map
\begin{equation}\label{eq:connection}
\nabla^{(\sigma)} : \mathscr{E}\to \mathscr{A}\cdot\underline{\varepsilon}\otimes_{\mathscr{A}}\mathscr{E},\quad m\mapsto \underline{\varepsilon}\otimes t_U(\id-\Phi)(m),\quad t_U\in \mathscr{A}(U), 
\end{equation}for each open $U\subseteq X$ and where $A\cdot\underline{\varepsilon}$ is the canonical rank one $\mathscr{A}(U)$-module with basis $\underline{\varepsilon}$. This map satisfies a twisted Leibniz rule over every open $U\subseteq X$ (suppressing $U$ from the notation):
\begin{multline*}
\nabla^{(\sigma)}(am)=\underline{\varepsilon}\otimes\big( t(\id-\sigma)(a)m\big)+\underline{\varepsilon}\otimes\big(a^\sigma t(\id-\Phi)(m)\big)\\
=\big(t(\id-\sigma)(a)\cdot\underline{\varepsilon}\big)\otimes e+a^\sigma\cdot\underline{\varepsilon}\otimes t(\id-\Phi)(m),
\end{multline*}i.e.,
$$\nabla^{(\sigma)}(am)=\partial_\sigma(a)\cdot\underline{\varepsilon}\otimes m+a^\sigma\nabla^{(\sigma)}m.$$In addition, we can easily see that
$$\nabla^{(\sigma)}\circ \Phi=q\cdot\Phi\circ\nabla^{(\sigma)},$$
where $q=\sigma(t)/t$. 

For simplicity we consider the affine case from now on. 
\begin{lem}\label{lem:diffconnection}Keeping the notation from above, there is a canonical $\sigma$-twisted connection $\nabla^{(\sigma)}_M$ given by (\ref{eq:connection}). Conversely, localizing at $t$ if necessary, given a $\sigma$-twisted connection we have a canonical $\sigma$-difference module $(M_\nabla,\Phi)$ as the kernel of $\nabla^{(\sigma)}$. 
\end{lem}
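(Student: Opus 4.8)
The plan is to unwind the two directions separately, observing that they are almost formally inverse to each other once the dictionary between $\Phi$ and $\nabla^{(\sigma)}$ is set up correctly. For the first direction, one checks that the map $\nabla^{(\sigma)}$ defined by (\ref{eq:connection}) genuinely is a $\sigma$-twisted connection in the sense of (\ref{eq:Leibniz}): this is exactly the computation already displayed before the lemma statement, which verifies the twisted Leibniz rule $\nabla^{(\sigma)}(am)=\partial_\sigma(a)\cdot\underline{\varepsilon}\otimes m+a^\sigma\nabla^{(\sigma)}m$, with restriction to $\mathscr{A}$ being the $\sigma$-derivation $\partial_\sigma=t(\id-\sigma)$. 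The word \emph{canonical} here means only that, once $t$ (equivalently $\partial_\sigma$) is fixed, the construction $\Phi\mapsto \nabla^{(\sigma)}$ depends on no further choices; so the first sentence of the lemma is really just a repackaging of the pre-lemma computation, and I would present it as such.

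For the converse, suppose we are given a $\sigma$-twisted connection $\nabla:\mathscr{E}\to \mathscr{A}\cdot\underline{\varepsilon}\otimes_{\mathscr{A}}\mathscr{E}$ whose restriction to $\mathscr{A}$ is $\partial_\sigma=t(\id-\sigma)$. After localizing at $t$ we may assume $t$ is a unit, so writing $\nabla(m)=\underline{\varepsilon}\otimes \theta(m)$ defines a $k$-linear endomorphism $\theta$ of $\mathscr{E}$, and the twisted Leibniz rule translates into $\theta(am)=t(\id-\sigma)(a)\,m+a^\sigma\theta(m)$. Now set $\Phi:=\id-t^{-1}\theta$. A direct substitution shows $\Phi(am)=am-t^{-1}\big(t(\id-\sigma)(a)m+a^\sigma\theta(m)\big)=\sigma(a)m+\sigma(a)\big(-t^{-1}\theta(m)\big)=\sigma(a)\Phi(m)$ — wait, one must be slightly careful: the cross term is $a^\sigma$ times $-t^{-1}\theta(m)$, and $am-(\id-\sigma)(a)m=\sigma(a)m$, so indeed $\Phi(am)=\sigma(a)\Phi(m)$, i.e. $\Phi$ is $\sigma$-linear. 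Thus $(\mathscr{E},\Phi,\sigma)$ is a $(\Phi,\sigma)$-difference module, and by construction $\theta=t(\id-\Phi)$, so feeding this $\Phi$ back into (\ref{eq:connection}) recovers $\nabla$; conversely starting from $\Phi$, forming $\nabla^{(\sigma)}$, and extracting $\Phi$ again returns the original $\Phi$. This is the precise sense in which the correspondence is canonical and mutually inverse.

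The one genuinely substantive point — and the reason the lemma statement says ``as the kernel of $\nabla^{(\sigma)}$'' rather than just ``$(\mathscr{E},\Phi)$'' — is the identification of the horizontal sections. Here $M_\nabla:=\ker\nabla^{(\sigma)}=\{m\in\mathscr{E}\mid \theta(m)=0\}=\{m\mid \Phi(m)=m\}$, the $\Phi$-invariants, using that $t$ is a unit after localization. One should record that $M_\nabla$ is a module over the fixed ring $\mathscr{A}^\sigma=\ker\partial_\sigma$ (not over all of $\mathscr{A}$), since for $a\in\mathscr{A}^\sigma$ and $m\in M_\nabla$ one has $\theta(am)=t(\id-\sigma)(a)m+a^\sigma\theta(m)=0$; this is the natural home for the difference module structure and matches the usual picture for $(\phi,\Gamma)$-modules. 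I expect the main obstacle to be purely bookkeeping: keeping straight which maps are $\mathscr{A}$-linear versus only $\mathscr{A}^\sigma$-linear, handling the sheafification/localization at $t$ cleanly (one really does need $t\in\mathscr{A}(U)^\times$ for $\Phi=\id-t^{-1}\theta$ to make sense, which is why the statement hedges with ``localizing at $t$ if necessary''), and confirming that the passage $X$-locally glues — but since we have restricted to the affine case as announced just before the lemma, no gluing is actually required and the argument is the elementary computation above.
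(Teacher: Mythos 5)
Your proof is correct and is the natural one. The paper itself provides no explicit argument for this lemma: the forward direction is exactly the twisted-Leibniz computation displayed just before the statement, and the converse direction is merely asserted, so your reconstruction $\Phi := \id - t^{-1}\theta$ together with the direct check of $\sigma$-linearity, $\Phi(am)=\sigma(a)\Phi(m)$, genuinely supplies the missing half. Your further remark that $M_\nabla = \ker\nabla^{(\sigma)}$ is naturally a module over the fixed subring $\mathscr{A}^\sigma = \ker\partial_\sigma$ rather than over all of $\mathscr{A}$ --- since $\Phi$ restricts to the identity on $M_\nabla$, and the identity is $\sigma$-linear over $\mathscr{A}$ only when $\sigma = \id$ --- is a correct and useful repair of the lemma's loosely worded final clause, which the paper leaves implicit; it is also the observation that makes the analogy with $(\phi,\Gamma)$-modules and $\phi$-invariants precise. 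One minor point to state explicitly if you write this up: the step $\mathscr{A}^\sigma = \ker\partial_\sigma$ already uses that $t$ is a non-zero-divisor (or, after localizing, a unit), so it belongs under the same ``localizing at $t$'' hypothesis as the definition of $\Phi$ itself.
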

\begin{remark}By twisting the actions of $A$ on $A\cdot\underline{\varepsilon}$ by the rule 
$$a\underline{\varepsilon}=\underline{\varepsilon}a^\sigma$$we can in fact transform the above twisted Leibniz rules into 'ordinary' Leibniz rules:
$$\nabla^{(\sigma)}(am)=\partial_\sigma(a)\cdot\underline{\varepsilon}\otimes m+a\nabla^{(\sigma)}m.$$This actually means that $A\cdot\underline{\varepsilon}$ actually is the first-order part of the twisted polynomial ring $A\{t,\sigma\}$, with $a\cdot \underline{\varepsilon}=\underline{\varepsilon}a^\sigma$. Notice that this is in fact the twisted polynomial ring $A[\theta;\sigma]$, with $a\theta=\theta\sigma(a)$.
\end{remark}
For $K$ a number field and $p$ a rational prime, we denote by $D_p$ the decomposition group of $p$ in $K$. 
\subsection{$p$-adic Hodge theory}
We need some basics from $p$-adic Hodge theory. The following recollection will be superficial at best. \\

\noindent\textbf{Notation.} From now on the following notation will be used.

Let $K_0$ be a finite unramified extension of $\mathbb{Q}_p$ and let $K$ be a finite totally ramified extension of $F$. In fact, if $k$ is the residue class field of $K$ then $K_0=W(k)[p^{-1}]$. For any field $F$, the designation $F^{\mathrm{unr}}$ means the maximal unramified extension of $F$ inside its algebraic closure. Throughout $V$ will be a $p$-adic representation of $G_K:=\Gal(K^\mathrm{alg}/K)$, i.e., a $\mathbb{Q}_p$-vector space with a continuous action of $G_K$.

For any (topological) $\mathbb{Q}_p$-algebra $B$, we say that $V$ is \emph{$B$-admissible} if $B\otimes_{\mathbb{Q}_p}V=B^d$ as $B[G_K]$-modules. Here $d:=\dim_{\mathbb{Q}_p}(V)$. 

There is a $\mathbb{Q}_p$-algebra $\mathbf{B}_\mathrm{cris}$ endowed with a continuous action of $G_K$ and a Frobenius morphism $\phi$. We won't define this here, merely refer to \cite{Berger0}, which is an excellent introduction to $p$-adic representation theory and $p$-adic Hodge theory.  There is an important invertible element inside $\mathbf{B}_\mathrm{cris}$, namely $t:=\log([\epsilon])$, where $[\epsilon]$ is the Teichm\"uller lift of a coherent sequence of $p^n$-th roots of unity in a certain canonical characteristic $p$ ring. We refer to \cite{Berger0} or \cite{Berger1} for the details. 

A $p$-adic representation is called \emph{crystalline} if it is $\mathbf{B}_\mathrm{cris}$-admissible. Associated to every crystalline representation is crystalline Dieudonn\'e module defined as 
$$D_\mathrm{cris}(V):=\big(\mathbf{B}_\mathrm{cris}\otimes_{\mathbb{Q}_p}V\big)^{G_K}.$$ This is a filtered $\phi$-module over $K_0$. 

Put $K_\infty:=\cup_{n\geq 0}K(\mu_{p^n})$, where $K(\mu_{p^n})$ denotes the field, generated over $K$, by the $p^n$-th roots of unity in $K^{\mathrm{alg}}$. We have the following map of extensions
$$\xymatrix{K^{\mathrm{alg}}\\
K_\infty\ar@{-}[u]\ar@/_1pc/@{..}[u]_{H_K}\\
K\ar@{-}[u]\ar@/_1pc/@{..}[u]_{{\Gamma}_K}\ar@/^1pc/@{..}[uu]^{G_K}\\
K_0\ar@{-}[u]}$$Hence $H_K=\Gal(K^{\mathrm{alg}}/K_\infty)$ and $\Gamma_K=G_K/H_K$.

We define the rings $\mathbf{A}^+_K$, $\mathbf{B}^+_K$,  $\mathbf{A}_K$ and $\mathbf{B}_K$ as 
$$\mathbf{A}_K^+:=\mathfrak{o}_K\big[[\pi]\big],\qquad \mathbf{B}_K^+:=\mathbf{A}_K^+[p^{-1}]$$
and 
$$\mathbf{A}_K:=\widehat{\mathfrak{o}_K\big[[\pi]\big][\pi^{-1}]}_{\hat{p}},\qquad \mathbf{B}_K:=\mathbf{A}_K[p^{-1}],$$where the subscript $\hat{p}$ means that the completion is taken with respect to $p$. There are canonical injections $\mathbf{A}^+_K\hookrightarrow \mathbf{A}^+$ and $\mathbf{B}_K^+\hookrightarrow \mathbf{B}^+$. Furthermore, let $\mathbf{B}$ be the $p$-adic completion of $\mathbf{B}_K^{\mathrm{unr}}$ inside a certain ring $\tilde{\mathbf{B}}$ that won't be defined here. 

For every $r>0$ there is a ring $\mathbf{B}^{\dagger,r}$ inside $\mathbf{B}$ (we won't define $\mathbf{B}^{\dagger,r}$ either), and we put
\begin{align*}
\mathbf{B}^{\dagger,r}_K&:=(\mathbf{B}^{\dagger,r})^{H_K}\\
&=\Big\{\sum_{-\infty}^{\infty} a_i\pi^i\,\,\Big\vert a_i\,\,\in F^{\mathrm{unr}},\,\, \sum_{-\infty}^{\infty} a_iT^i,\,\, \text{convergent and bounded on}\,\, A[p^{\frac{1}{e_K r}},1)\Big\}.
\end{align*}Here $e_K$ is the absolute ramification index of $K/K_0$ and $A[a,b)$ denotes all $x\in K$ such that $a\leq |x|<b$. Further, put $\mathbf{B}^\dagger_K:=\cup_{r>0}\mathbf{B}^{\dagger,r}_K$. All of the above rings, except $\mathbf{B}^{\dagger,r}_K$, support a Frobenius morphism, which we denote $\phi$ in all cases. 

A $\mathbf{B}_K$-vector space $M$ with a semilinear action of $\phi$ is called a \emph{$(\phi,\Gamma)$-module}; if the canonical $\phi^\ast M\to M$ is an isomorphism, $M$ is an \emph{\'etale} $(\phi,\Gamma)$-module. This last is equivalent to $M$ having a $\phi$-stable basis. 

A basic theorem of J.-M. Fontaine asserts that there is an equivalence of categories between the category of $G_K$-representations and \'etale $(\phi,\Gamma)$-modules, given by
$$	V \mapsto D(V):=(\mathbf{B}\otimes_{\mathbb{Q}_p} V)^{H_K},\quad\text{with inverse}\quad	D\mapsto (\mathbf{B}\otimes_{\mathbf{B}_K} V)^{\phi=\id}.
$$ Here, $(?)^{\phi=\id}$ means taking invariants under $\phi$. 

By replacing $\mathbf{B}$ and $\mathbf{B}_K$ in the above functors by the rings $\mathbf{B}^{\dagger,r}$, $\mathbf{B}^{\dagger,r}_K$, $\mathbf{B}^{\dagger}$ and $\mathbf{B}^\dagger_K$, we define similarly
$$	V \mapsto D^{\dagger,r}_K(V):=(\mathbf{B}^{\dagger,r}\otimes_{\mathbb{Q}_p} V)^{H_K},\quad\text{with inverse}\quad	D\mapsto (\mathbf{B}^{\dagger,r}\otimes_{\mathbf{B}^{\dagger,r}_K} V)^{\phi=\id}.
$$and
$$	V \mapsto D^{\dagger}_K(V):=(\mathbf{B}^{\dagger}\otimes_{\mathbb{Q}_p} V)^{H_K}=\bigcup_{r>0}D^{\dagger,r}_K(V),\quad\text{with inverse}\quad	D\mapsto (\mathbf{B}^{\dagger}\otimes_{\mathbf{B}^{\dagger}_K} V)^{\phi=\id}.
$$A result by F. Cherbonnier and P. Colmez \cite{CherbonnierColmez} gives that $D_K(V)$ can be obtained as 
$$D_K(V)=\mathbf{B}\otimes_{\mathbf{B}^{\dagger,r}_K}D^{\dagger,r}_K(V), \quad\text{for $r$ big enough,}$$and $V\mapsto D^{\dagger}_L(V)$ is an equivalence of categories between $G_K$-representations and \'etale $(\phi,\Gamma)$-modules over $\mathbf{B}^\dagger_K$. We need an extension ring of $\mathbf{B}^\dagger_K$, namely, $\mathbf{B}^\dagger_{\mathrm{rig},K}$. This is defined in the same way as $\mathbf{B}^\dagger_K$ but without the boundedness condition:
\begin{align*}
\mathbf{B}^{\dagger,r}_K:=\Big\{\sum_{-\infty}^{\infty} a_i\pi^i\,\,\Big\vert a_i\,\,\in K_0^{\mathrm{unr}},\,\, \sum_{-\infty}^{\infty} a_iT^i,\,\, \text{convergent on}\,\, A[p^{\frac{1}{e_K r}},1)\Big\}.
\end{align*}We put $\mathbf{B}^\dagger_{\mathrm{rig},K}:=\cup_{r>0}\mathbf{B}^{\dagger,r}_{\mathrm{rig},K}$ and 
$$V\mapsto D_{\mathrm{rig},K}^\dagger(V):=\mathbf{B}^\dagger_{\mathrm{rig},K}\otimes_{\mathbf{B}^\dagger_K}D^\dagger(V).$$
The ring $\mathbf{B}^\dagger_{\mathrm{rig},K}$ is a domain.

\subsection{Rigid analytic spaces}
For more details on what follows next, I recommend P. Berthelot's preprint \cite{BerthelotRigid} or J. Nicaise's introduction \cite{Nicaise}. 

Recall that a \emph{Tate algebra} over $K$ is 
$$T_m:=K\{t_1,t_2,\dots, t_m\}=\Big\{\sum_{\underline{i}\in\mathbb{N}^m}a_{\underline{i}}t^{\underline{i}}\quad\big\vert \quad a_{\underline{i}}\in K^m, \,\, |a_{\underline{i}}|\to 0, \,\,\text{as}\,\, |\underline{i}|\to \infty\Big\}.$$ A quotient of a Tate algebra is called an \emph{affinoid algebra}. Let $\mathrm{Spm}(A)$ denote the maximal spectrum (i.e., the set of maximal ideals of $A$) of an algebra $A$. An \emph{affinoid space} or \emph{affine rigid space} is a space of the form $\mathrm{Spm}(A)$ for some affinoid algebra $A$. Loosely speaking, a \emph{rigid analytic space} is a topological space, given by glueing affinoid spaces. 

Let $X$ be a scheme over the valuation ring $\mathfrak{o}_K$ of a local field $K$. Completing $X$ at the closed fibre yields a formal scheme $X_\infty$. We define the \emph{rigid space} associated to $X$ as the generic fibre of $X_\infty$ and denote it $X^{\mathrm{an}}$. 

We won't use rigid analytic spaces other than affinoid spaces, so for simplicity we call affinoid spaces simply as \emph{rigid spaces}. 
\subsection{Rigid spaces and families of $(\phi,\Gamma)$-modules}
This section more or less follows \cite{KedlayaLiu}, but see also \cite{Hellmann}. Unadorned completed tensor products are over $\mathbb{Q}_p$. 

Let $A(0,s]\subset\mathbb{C}_p$ be an annulus with $s$ sufficiently large. We consider $\mathbf{B}^{\dagger,s}_{\mathrm{rig},K}$ as the ring of functions on $A(0,s]$, and $\mathbf{B}^{\dagger}_{\mathrm{rig},K}$ as a ring of functions on $A(0,1)=\dirlim A(r,1)\subset\mathbb{C}_p$.  More specifically, we set
\begin{align*}
\mathscr{B}^{\dagger,s}_{K}&:=H^0\big(A(0,s],\mathscr{O}_{A(0,s]}\big)\\
\mathscr{B}^{\dagger}_{\mathrm{rig},K}&:=H^0\big(A(0,1),\mathscr{O}_{A(0,1)}\big).
\end{align*}Then $\mathbf{B}^{\dagger,s}_K$ and $\mathbf{B}^\dagger_{\mathrm{rig},K}$ are the algebras of global sections of $\mathscr{B}^{\dagger,s}_{K}$ and $\mathscr{B}^{\dagger}_{\mathrm{rig},K}$, respectively.

We will consider the product $X\times A$ of rigid spaces, where $A$ is any annulus. Notice that $$\mathrm{pr}_{X,\ast}\mathscr{O}_{X\times A}=\mathscr{O}_X\hat{\otimes} H^0(A,\mathscr{O}_A).$$
\begin{dfn}Let $X=\mathrm{Spm}(A)$, with $A$ an affinoid algebra, be a rigid space with structure sheaf $\mathscr{O}_X$. We define a \emph{family of $(\phi,\Gamma)$-modules} over $X$ to be a locally free sheaf $\mathscr{B}$ of $\mathscr{O}_X\hat{\otimes}\mathscr{B}^{\dagger}_{\mathrm{rig},K}$-modules on $X$, together with semilinear commuting actions of $\phi$ and $\Gamma$, such that $\phi^\ast\mathscr{B}\xrightarrow{\approx}\mathscr{B}$. If $\mathscr{B}$ is induced from a $\mathscr{O}_X\hat{\otimes}\mathscr{B}^{\dagger,r}_{K}$-module $\mathscr{B}^\circ$ by extension of scalars, $\mathscr{B}$ is said to be \emph{\'etale} and $\mathscr{B}^\circ$ is the \emph{model} for $\mathscr{B}$.
\end{dfn}

Corollary 6.6 of \cite{KedlayaLiu} states that an \'etale model is unique assuming it exists. The following result can be found in \cite{KedlayaLiu} (Theorem 3.11 combined with Definition 3.12):
\begin{prop}\label{prop:KedlayaLiuTheorem}Let $\mathscr{E}$ be a locally free $G_K$-representation over $X$. Then there is an $r'\geq 0$ such for all $r\geq r'$ there exists a module $D^{\dagger,r}_K(\mathscr{E})$ over $\mathscr{O}_X\hat{\otimes}\mathscr{B}^{\dagger,r}_{K}$ satisfying
\begin{itemize}
	\item[(a)] $\mathrm{rk}(D^{\dagger,r}_K(\mathscr{E}))=\mathrm{rk}(\mathscr{E})$;
	\item[(b)] for any $\mathfrak{m}\in X$, the natural map
	$$D^{\dagger,r}_K(\mathscr{E})\otimes_{\mathscr{O}_X}k(\mathfrak{m})\longrightarrow D^{\dagger,r}_K(\mathscr{E}_\mathfrak{m})$$ is an isomorphism.
\end{itemize}
\end{prop}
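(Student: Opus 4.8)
This is Theorem~3.11 combined with Definition~3.12 of \cite{KedlayaLiu}, so the plan is to recall the shape of that proof: it is a relative (``in families'') version of the Cherbonnier--Colmez overconvergence theorem, carried out via the Tate--Sen descent formalism with the affinoid algebra $A=\mathscr{O}_X(X)$ dragged along as a coefficient ring. First I would reduce to the case that $\mathscr{E}$ is \emph{free}: since $X=\mathrm{Spm}(A)$ is affinoid and $\mathscr{E}$ is locally free, one may cover $X$ by finitely many affinoid opens over which $\mathscr{E}$ trivializes, construct $D^{\dagger,r}_K$ over each, and glue. The gluing is automatic because the overconvergent \'etale model is unique once it exists (Corollary~6.6 of \cite{KedlayaLiu}), so the local pieces agree on overlaps. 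Thus one may assume $\mathscr{E}$ corresponds to a continuous homomorphism $\rho\colon G_K\to\mathrm{GL}_d(A)$ with $d=\mathrm{rk}(\mathscr{E})$.

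The core step is to perform, relatively over $A$, the two descents underlying the classical construction applied to $(A\,\hat{\otimes}\,\widetilde{\mathbf{B}})\otimes_{\mathbb{Q}_p}V$, where $\widetilde{\mathbf{B}}$ is the big period ring containing $\mathbf{B}$ and $V$ carries the diagonal action of $G_K=H_K\rtimes\Gamma_K$ through $\rho$ and the Galois action on $\widetilde{\mathbf{B}}$: first take $H_K$-invariants to descend to $A\,\hat{\otimes}\,\widetilde{\mathbf{B}}^{\dagger,r}_K$, then descend along the field-of-norms tower from $\widetilde{\mathbf{B}}^{\dagger,r}_K$ to $\mathbf{B}^{\dagger,r}_K$. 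Both descents are governed by the Tate--Sen axioms --- existence of normalized traces, almost-vanishing of $H^1(H_K,-)$ with the relevant coefficients, and invertibility of $\gamma-1$ on suitable quotients --- and these are statements about $G_K$ acting on period rings that are insensitive to tensoring with the Banach $\mathbb{Q}_p$-algebra $A$, since $A$ enters as a flat, orthonormalizable, ``constant'' factor and the normalized-trace estimates are uniform in the coefficient Banach module. The outcome is a finite projective $A\,\hat{\otimes}\,\mathbf{B}^{\dagger,r}_K$-module stable under $\phi$ and $\Gamma$ and functorial in $\mathscr{E}$; this is $D^{\dagger,r}_K(\mathscr{E})$. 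The threshold $r'$ beyond which the descent converges depends only on how close $\rho(G_K)$ is to the identity, and since $\rho$ is continuous on the compact group $G_K$ with values in $\mathrm{GL}_d$ over the Banach algebra $A$, this threshold can be taken \emph{uniform} over all of $X$. Producing a single such $r'$ --- one that does not degenerate as the point of $X$ varies --- is the main technical obstacle, and it is precisely here that the affinoid hypothesis on $X$ is used.

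Granting the construction, (a) is immediate, since the descents change neither rank nor determinant: $\mathrm{rk}_{A\,\hat{\otimes}\,\mathbf{B}^{\dagger,r}_K}D^{\dagger,r}_K(\mathscr{E})=\dim_{\mathbb{Q}_p}V=\mathrm{rk}(\mathscr{E})$. For (b), fix $\mathfrak{m}\in X$ with residue field $k(\mathfrak{m})$; every ingredient of the construction (normalized traces, the $H^1$-vanishing, and $\gamma-1$) commutes with the base change $A\to k(\mathfrak{m})$, so one obtains a natural comparison map $D^{\dagger,r}_K(\mathscr{E})\otimes_{\mathscr{O}_X}k(\mathfrak{m})\to D^{\dagger,r}_K(\mathscr{E}_\mathfrak{m})$. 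By (a) both sides are finite projective of rank $d$ over $k(\mathfrak{m})\,\hat{\otimes}\,\mathbf{B}^{\dagger,r}_K$, and the map is surjective modulo the fibrewise maximal ideals by construction, so a Nakayama/d\'evissage argument over the domain $\mathbf{B}^{\dagger,r}_K$ upgrades it to an isomorphism. Reassembling the local isomorphisms, together with the uniqueness of the \'etale model, yields the statement globally over $X$.
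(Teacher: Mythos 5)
The paper does not supply a proof of this Proposition: it is quoted from Kedlaya and Liu (the sentence immediately before the statement reads ``The following result can be found in \cite{KedlayaLiu} (Theorem 3.11 combined with Definition 3.12)''), and the author then remarks that the definition of $D^{\dagger,r}_K(\mathscr{E})$ is in \cite{BergerColmez} and that ``we only need existence here.'' So there is no in-paper argument to compare against; you are reconstructing an omitted external proof.

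As a sketch of the Kedlaya--Liu / Berger--Colmez construction, your outline is broadly faithful: relative Tate--Sen descent with the affinoid $A=\mathscr{O}_X(X)$ dragged along as a coefficient Banach algebra, the two-stage $H_K$- and field-of-norms descent, uniformity of the overconvergence radius $r'$ as the main technical point (and where the affinoid hypothesis enters), and base-change compatibility giving (b). A few cautions on the details. The phrase ``insensitive to tensoring with $A$'' compresses the real work: verifying the relative Tate--Sen axioms requires explicit, uniform operator-norm estimates for the normalized traces on $A$-Banach modules, not merely flatness or orthonormalizability of $A$. In the sources the locally free case is handled sheaf-theoretically rather than by proving the free case and then gluing via Corollary 6.6; your patching route is in the right spirit but is not how the argument is organized there. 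And the base-change isomorphism in (b) falls directly out of the Tate--Sen construction (Berger--Colmez prove it as part of the descent), so the Nakayama/d\'evissage upgrade you invoke is more machinery than is actually needed. None of these is a fatal gap, but each is a place where a complete proof would require more care than your sketch indicates.
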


The definition of the module $D^{\dagger,r}_K(\mathscr{E})$ is rather complicated and can be found in \cite{BergerColmez}. We only need existence here. 

Define 
$$D^\dagger_K(\mathscr{E}):=D^{\dagger,r}_K(\mathscr{E})\underset{\mathscr{O}_X\hat{\otimes}
\mathscr{B}^{\dagger,r}_K}{\bigotimes}(\mathscr{O}_X\hat{\otimes} \mathscr{B}^{\dagger}_K),\quad \text{for $r$ big enough.}$$This is a family of \'etale $(\phi,\Gamma)$-modules over $\mathscr{O}_X\hat{\otimes} \mathscr{B}^{\dagger}_K$. Similarly we put
$$D^{\dagger}_{\mathrm{rig},K}(\mathscr{E}):=D^\dagger_K(\mathscr{E})\underset{\mathscr{O}_X\hat{\otimes}
\mathscr{B}^\dagger_K}{\bigotimes}(\mathscr{O}_X\hat{\otimes} \mathscr{B}^\dagger_{\mathrm{rig},K})$$ and this defines a family of \'etale $(\phi,\Gamma)$-modules over $\mathscr{O}_X\hat{\otimes} \mathscr{B}^\dagger_{\mathrm{rig},K}$.

\subsection{$p$-adic hom-Lie algebras and $(\phi,\Gamma)$-modules}
To add more structure, it is sometimes beneficial to endow the modules involved with an algebra structure. Obviously, such a structure is not in any way unique or canonical, so different algebra structures give rise to different hom-Lie algebra structures. 

If $\mathscr{E}$ is an $\mathscr{O}_{X^\mathrm{an}}$-algebra, then $D^\dagger_{\mathrm{rig}}(\mathscr{E})$ inherits this algebra structure. \emph{However}, the actions of $\phi$ and $\Gamma$ and their compatibility impose serious restrictions on the possible algebra structures on $D^\dagger_{\mathrm{rig}}(\mathscr{E})$. It might be (or is likely?) impossible to descend the given algebra structure on $\mathscr{E}$ to one on $D^\dagger_{\mathrm{rig}}(\mathscr{E})$, \emph{compatible} with the actions of $\phi$ and $\Gamma$.

Recall that by a $K$-rigid space we mean an affinoid variety over $K$, where $K$ is a local field of characteristic zero. A \emph{Frobenius} (\emph{morphism}) on $X^{\mathrm{an}}$ is a lift $\phi: \mathscr{O}_{X^\mathrm{an}}\to\mathscr{O}_{X^\mathrm{an}}$ of the Frobenius on $K$. 
\begin{dfn}Let $X^{\mathrm{an}}$ be a rigid space over $K$, $\phi$ a lift of Frobenius. We define a (very) \emph{fake Frobenius structure} on $X^{\mathrm{an}}$ to be a quadruple $(\mathscr{M}, \mathscr{A}, \phi, \nabla^{(\gamma)})$ where
\begin{itemize}
\item[(i)] $\mathscr{A}$ is an $\mathscr{O}_{X^\mathrm{an}}$-algebra with commuting actions of $\phi$ and $\gamma\in\Aut(\mathscr{A})$;
\item[(ii)] $\mathscr{M}$ an $\mathscr{A}$-module with commuting actions of $\phi$ and $\gamma\in\Aut(\mathscr{M})$, compatible with the actions on $\mathscr{A}$;
\item[(iii)]$\nabla^{(\gamma)}$ is an $\mathscr{O}_{X^\mathrm{an}}$-linear operator on $\mathscr{M}$ satisfying
$$\nabla^{(\gamma)}(a.m)=\nabla^{(\gamma)}_A(a).m+\gamma(a).\nabla^{(\gamma)}(m),$$ for some restriction $\nabla^{(\gamma)}_A$ of $\nabla^{(\gamma)}$ to $\mathscr{A}$;
\item[(iv)] $\phi$ and $\nabla^{(\gamma)}$ are compatible in the sense
$$\nabla^{(\gamma)}\circ \phi=p\cdot\phi\circ\nabla^{(\gamma)}.$$
\end{itemize}Clearly, this definition is independent on the assumption that $X^{\mathrm{an}}$ is a rigid space; we could equally well have assumed it to be an ordinary scheme.
\end{dfn}

The inclusion of the term ``Frobenius structure'' in the name comes from the fact that this structure visibly resembles a ``true'' Frobenius structure. We can ``re-twist'' the Leibniz rule in (iii) as in \ref{sec:differencemodules} by introducing a twisted polynomial algebra. However, in this case we have to accept something to the effect of ``non-commutative differential forms''.

We denote by $\chi$ the cyclotomic character $\chi: G_K\to \mathbb{Z}_p^\ast$. It is known, see \cite[p. 222]{Berger1} for instance, that 
$$\phi(t)=pt,\quad\text{and}\quad \gamma(t)=\chi(\gamma)t\quad \text{on}\quad \mathbf{B}^\dagger_{\mathrm{rig},K}.$$
\begin{thm}
Let $X^{\mathrm{an}}$ be a rigid space and let $\mathscr{E}$ be a torsion-free $\mathscr{O}_{X^{\mathrm{an}}}$-module. Assume that there is an $\mathscr{O}_{X^{\mathrm{an}}}$-linear action of $G_K:=\Gal(K^{\mathrm{alg}}/K)$ on $\mathscr{E}$. Fix $\sigma\in G_K$. 
\begin{itemize}
\item[(a)] There is a family $D^\dagger_{\mathrm{rig},K}(\mathscr{E})$ of $(\phi,\Gamma)$-modules over $\mathscr{O}_{X^\mathrm{an}}\hat\otimes \mathscr{B}^\dagger_{\mathrm{rig},K}[t^{-1}]$ together with two canonical global hom-Lie algebra structures  $$\SDer_\gamma(D^\dagger_{\mathrm{rig},K}(\mathscr{E}))\quad\text{and}\quad \SDer_\phi(D^\dagger_{\mathrm{rig},K}(\mathscr{E})),\quad\text{over}\quad \mathscr{O}_{X^\mathrm{an}}\hat{\otimes}\mathscr{B}^\dagger_{\mathrm{rig},K}[t^{-1}],$$ where $\gamma\in \im(G_K\twoheadrightarrow \Gamma)$, generated by 
$$\nabla^{(\gamma)}:=\frac{\id-\gamma}{(1-\chi(\gamma))t}\quad\text{and}\quad \nabla^{(\phi)}:=\frac{\id-\phi}{(1-p)t}.$$ Furthermore, these structures are independent of the choice of $\gamma$.
\item[(b)] These operators satisfy
$$\nabla^{(\phi)}\circ\phi=p\cdot \phi\circ\nabla^{(\phi)},\quad \nabla^{(\gamma)}\circ \gamma=\chi(\gamma)\cdot\gamma\circ\nabla^{(\gamma)},$$ 
$$\nabla^{(\gamma)}\circ\phi=p\cdot\phi\circ\nabla^{(\gamma)},\quad \nabla^{(\phi)}\circ \gamma=\chi(\gamma)\cdot\gamma\circ\nabla^{(\phi)},$$and
\begin{align*}
\nabla^{(\phi)}\circ\nabla^{(\gamma)}&=t^{-1}\nabla^{(\gamma)}-p^{-1}t^{-1}\nabla^{(\gamma)}
\circ\phi,\\ \nabla^{(\gamma)}\circ\nabla^{(\phi)}&=t^{-1}\nabla^{(\phi)}-\chi(\gamma)t^{-1}\nabla^{(\phi)}
\circ\gamma.\end{align*}
\item[(c)] The quadruple 
$$\big(D^\dagger_{\mathrm{rig},K}(\mathscr{E}_\mathfrak{m}), 
\mathscr{O}_{X^\mathrm{an}}\hat\otimes \mathscr{B}^\dagger_{\mathrm{rig},K}[t^{-1}],\phi,\nabla^{(\gamma)}\big)$$ defines a family of fake Frobenius structures on $X^\mathrm{an}$.
\end{itemize}
\end{thm}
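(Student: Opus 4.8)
The plan is to reduce everything to the ``universal'' example of a $\sigma$-twisted derivation (the $b(\id-\sigma)$ construction from Section~2) together with the two facts about the action of $\phi$ and $\gamma$ on $t\in\mathbf{B}^\dagger_{\mathrm{rig},K}$, namely $\phi(t)=pt$ and $\gamma(t)=\chi(\gamma)t$. For part~(a), I would first invoke Proposition~\ref{prop:KedlayaLiuTheorem} to produce $D^{\dagger,r}_K(\mathscr{E})$ over $\mathscr{O}_{X^\mathrm{an}}\hat\otimes\mathscr{B}^{\dagger,r}_K$ and then extend scalars exactly as in the paragraph defining $D^\dagger_{\mathrm{rig},K}(\mathscr{E})$, finally inverting $t$ to pass to $\mathscr{O}_{X^\mathrm{an}}\hat\otimes\mathscr{B}^\dagger_{\mathrm{rig},K}[t^{-1}]$; the semilinear commuting actions of $\phi$ and $\Gamma$ are transported along these base changes since $t$ is fixed up to a unit by both. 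Since $\mathscr{E}$ carries a $G_K$-action and $\gamma$ is the image of $\sigma$ in $\Gamma$, the maps $\nabla^{(\gamma)}=(\id-\gamma)/((1-\chi(\gamma))t)$ and $\nabla^{(\phi)}=(\id-\phi)/((1-p)t)$ make sense on $D^\dagger_{\mathrm{rig},K}(\mathscr{E})$ once $t$ is invertible, and one checks directly from $\gamma(a.m)=\gamma(a)\gamma(m)$ that $\id-\gamma$ is a $\gamma$-twisted derivation and likewise for $\phi$; dividing by the scalars $(1-\chi(\gamma))t$ and $(1-p)t$ is harmless because these are units and the normalization is chosen precisely so that the restrictions to the base algebra are the ``standard'' twisted derivations. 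Then Theorem~\ref{thm:twistprod} applies verbatim to $A\cdot\delta_\sigma$ with $A=\mathscr{O}_{X^\mathrm{an}}\hat\otimes\mathscr{B}^\dagger_{\mathrm{rig},K}[t^{-1}]$, $\delta_\sigma=\nabla^{(\gamma)}$ (resp.\ $\nabla^{(\phi)}$), giving the two hom-Lie algebra structures $\SDer_\gamma$ and $\SDer_\phi$; the hypothesis $\sigma(\Ann(\delta_\sigma))\subseteq\Ann(\delta_\sigma)$ is automatic since we work over a domain, and \eqref{eq:qsigmapartial} is exactly the first identity of part~(b). Independence of the choice of $\gamma$ follows because any two topological generators of $\Gamma$ differ by multiplication by a unit in $\mathbb{Z}_p^\times$, and the normalization by $(1-\chi(\gamma))t$ absorbs that ambiguity; this is the one point where I would be slightly careful, as one must verify that the resulting module $A\cdot\nabla^{(\gamma)}$ and its bracket genuinely do not depend on $\gamma$, not merely that they are abstractly isomorphic.

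For part~(b), all eight identities are formal consequences of $\phi(t)=pt$, $\gamma(t)=\chi(\gamma)t$, and the commutation of $\phi$ with $\gamma$ on $D^\dagger_{\mathrm{rig},K}(\mathscr{E})$. For instance, $\nabla^{(\phi)}\circ\phi$ versus $\phi\circ\nabla^{(\phi)}$: writing $\nabla^{(\phi)}=\frac{1}{(1-p)t}(\id-\phi)$ and using $\phi\circ\frac{1}{(1-p)t}=\frac{1}{(1-p)pt}\circ\phi$, one gets $\phi\circ\nabla^{(\phi)}=\frac{1}{p}\nabla^{(\phi)}\circ\phi$ after a one-line rearrangement; the cross relations $\nabla^{(\gamma)}\circ\phi=p\,\phi\circ\nabla^{(\gamma)}$ and $\nabla^{(\phi)}\circ\gamma=\chi(\gamma)\,\gamma\circ\nabla^{(\phi)}$ come out the same way using that $\gamma$ commutes with $\phi$ and scales $t$ by $\chi(\gamma)$. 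The two composite formulas $\nabla^{(\phi)}\circ\nabla^{(\gamma)}=t^{-1}\nabla^{(\gamma)}-p^{-1}t^{-1}\nabla^{(\gamma)}\circ\phi$ and its companion are obtained by expanding the left side as $\frac{1}{(1-p)t}(\id-\phi)\circ\frac{1}{(1-\chi(\gamma))t}(\id-\gamma)$, commuting the scalar $\frac{1}{(1-\chi(\gamma))t}$ past $\id-\phi$ (which multiplies it by $1$ on the $\id$-term and by $p^{-1}$ on the $\phi$-term), and collecting; I expect this to be the most computation-heavy step but it is entirely mechanical.

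For part~(c), I would simply observe that the quadruple $(D^\dagger_{\mathrm{rig},K}(\mathscr{E}_\mathfrak{m}),\,\mathscr{O}_{X^\mathrm{an}}\hat\otimes\mathscr{B}^\dagger_{\mathrm{rig},K}[t^{-1}],\,\phi,\,\nabla^{(\gamma)})$ satisfies the four axioms of a (very) fake Frobenius structure: axiom~(i) and~(ii) are the families of $(\phi,\Gamma)$-modules from part~(a) together with the algebra structure on the base (and its $\phi,\gamma$-actions), axiom~(iii) is the twisted Leibniz rule verified in part~(a) (with $\nabla^{(\gamma)}_A$ the restriction), and axiom~(iv), $\nabla^{(\gamma)}\circ\phi=p\cdot\phi\circ\nabla^{(\gamma)}$, is one of the identities of part~(b). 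Fiberwise this works for every $\mathfrak{m}\in X$ by Proposition~\ref{prop:KedlayaLiuTheorem}(b), which guarantees $D^{\dagger,r}_K(\mathscr{E})\otimes k(\mathfrak{m})\cong D^{\dagger,r}_K(\mathscr{E}_\mathfrak{m})$ so the operators descend compatibly to the fibers, yielding a \emph{family} of fake Frobenius structures on $X^\mathrm{an}$. The only genuine subtlety across the whole proof is the well-definedness and $\gamma$-independence in part~(a); once that is pinned down, (b) and (c) are bookkeeping.
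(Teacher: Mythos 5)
Your proposal follows essentially the same route as the paper's proof: both rest on the existence result from Kedlaya--Liu (Proposition~\ref{prop:KedlayaLiuTheorem}) for $D^{\dagger,r}_K(\mathscr{E})$, extension of scalars to $\mathscr{O}_{X^\mathrm{an}}\hat\otimes\mathscr{B}^\dagger_{\mathrm{rig},K}$ followed by inverting $t$, the two scaling facts $\phi(t)=pt$ and $\gamma(t)=\chi(\gamma)t$ so that $(\id-\gamma)(t)$ and $(\id-\phi)(t)$ become units, and then an appeal to Theorem~\ref{thm:twistprod} to produce the hom-Lie bracket on $A\cdot\nabla^{(\gamma)}$ and $A\cdot\nabla^{(\phi)}$. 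Part~(b) is likewise a mechanical computation in both.

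Two small observations. First, the paper's proof goes a step further than yours on the generation claim: it invokes Theorem~\ref{thm:UFD} (the ``universal'' generator $(\id-\sigma)/\gcd$ over a UFD) to assert that $\SDer_\gamma(D^\dagger_{\mathrm{rig},K}(\mathscr{E}_\mathfrak{m}))$ is exactly $\mathbf{B}^\dagger_{\mathrm{rig},K}[t^{-1}]\cdot\nabla^{(\gamma)}$, whereas you apply Theorem~\ref{thm:twistprod} directly to the cyclic module $A\cdot\delta_\sigma$ without claiming it is the full module of $\gamma$-twisted derivations; that is a slightly weaker but still correct reading of the statement. Second, your caution about independence of the choice of $\gamma$ is warranted: the paper's proof only addresses independence of the representative within the coset $\sigma H_K$ (which is automatic since the $H_K$-action is trivial on $D^\dagger_{\mathrm{rig},K}$), and does not elaborate further. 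Your worry about whether the bracket itself, and not merely the underlying module, is $\gamma$-independent is a real subtlety; the normalization by $(1-\chi(\gamma))t$ is indeed what compensates for the $\gamma$-dependence of $q_\gamma=\chi(\gamma)$ in the hom-Jacobi identity, and your proposal to make this explicit would strengthen the argument beyond what the paper writes out.
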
Notice that this means that there are actions of the global hom-Lie algebras $$\SDer_\phi(D^\dagger_{\mathrm{rig},K}(\mathscr{E}))\quad\text{and}\quad \SDer_\gamma(D^\dagger_{\mathrm{rig},K}(\mathscr{E}))$$ on the sheaf $D^\dagger_{\mathrm{rig},K}(\mathscr{E})$.

As we will see in the proof, I cheated a bit in the formulation of the theorem in that we have to invert $t$, i.e., we will consider
$$D^\dagger_{\mathrm{rig},K}(\mathscr{E}_\mathfrak{m})[t^{-1}]=D^\dagger_{\mathrm{rig},K}(\mathscr{E}_\mathfrak{m})\otimes_{\mathbf{B}^\dagger_{\mathrm{rig},K}}
\mathbf{B}^\dagger_{\mathrm{rig},K}[t^{-1}]$$ as a module over $\mathbf{B}^\dagger_{\mathrm{rig},K}[t^{-1}]$.
\begin{proof}By definition $D^\dagger_{\mathrm{rig},K}(\mathscr{E}_\mathfrak{m})$ is a $\mathbf{B}^\dagger_{\mathrm{rig},K}$-module for every $\mathfrak{m}\in X^{\mathrm{an}}$. This module is invariant under $H_K$ so any $\sigma\in G_K$ maps to a class $\sigma H_K\in \Gamma$ and the action of this class is independent of representative. Pick one $\gamma\in \sigma H_K$. Extend $\mathbf{B}^\dagger_{\mathrm{rig},K}$ by inverting $t$, i.e., $\mathbf{B}^\dagger_{\mathrm{rig},K}[t^{-1}]$. Then $(\id-\gamma)(t)=(1-\chi(\gamma))t$, which is invertible in $\mathbf{B}^\dagger_{\mathrm{rig},K}[t^{-1}]$. Hence $$\SDer_\gamma(D^\dagger_{\mathrm{rig},K}(\mathscr{E}_\mathfrak{m}))=\mathbf{B}^\dagger_{\mathrm{rig},K}[t^{-1}]\cdot \nabla^{(\gamma)},\quad\text{with}\quad \nabla^{(\gamma)}:=\frac{\id-\gamma}{(1-\chi(\gamma))t},$$by Lemma \ref{thm:classtwistder}. An easy calculation shows that 
$\nabla^{(\gamma)}\circ \gamma=\chi(\gamma)\cdot \gamma\circ\nabla^{(\gamma)}$. Indeed,
\begin{multline}\label{eq:ComNablaGamma}
\begin{split}
\gamma\circ\nabla^{(\gamma)}=((1-\chi(\gamma)))^{-1}\gamma\big(t^{-1}(\id-\gamma)\big)=
((1-\chi(\gamma)))^{-1}\gamma(t^{-1})\gamma(\id-\gamma)\\
=((1-\chi(\gamma)))^{-1}\frac{\gamma(t^{-1})}{t^{-1}}t^{-1}(\id-\gamma)\gamma=
\chi(\gamma)^{-1}\nabla^{(\gamma)}\circ \gamma.
\end{split}
\end{multline}

This defines a local hom-Lie algebra and globalizing the construction yields a global hom-Lie algebra. Notice that $\nabla^{(\gamma)}$ is up to a factor a first-order truncation of Sen's operator $\Theta$ as given in, for instance, \cite{Berger1}. 

For the other structure we take the Frobenius $\phi$ instead of $\gamma$. In this case we have $\phi(t)=pt$ and so $(\id-\phi)(t)=(1-p)t$, which is also invertible in $\mathbf{B}^\dagger_{\mathrm{rig},K}[t^{-1}]$. Thus the same construction applies and, as above, and we get
$$\SDer_\phi(D^\dagger_{\mathrm{rig},K}(\mathscr{E}_\mathfrak{m}))=\mathbf{B}^\dagger_{\mathrm{rig},K}[t^{-1}]\cdot \nabla^{(\phi)}, \qquad \nabla^{(\phi)}:=\frac{\id-\phi}{(1-p)t}$$ By the exact same arguments as in (\ref{eq:ComNablaGamma}) we prove $\nabla^{(\phi)}\circ \phi=p\cdot\phi\circ\nabla^{(\phi)}$ and 
$$\nabla^{(\gamma)}\circ \phi=p\cdot \phi\circ\nabla^{(\gamma)},\quad\text{and}\quad 
\nabla^{(\phi)}\circ \gamma=\chi(\gamma)\cdot \gamma\circ\nabla^{(\phi)}.$$This proves (a). Part (b) follows by the same type of arguments as in (\ref{eq:ComNablaGamma}), and (c) is clear from (a), (b) and from the discussion in section \ref{sec:differencemodules}.  
\end{proof}
\subsection{The generic example}In order to illustrate the above theorem we compute the generic example, in the sense that all others can be obtained from this by specializing the parameters. For simplicity we do the local case. The global case is obtained from the local by simply globalizing. 

Put $W:=\mathscr{E}_\mathfrak{m}$. We can write $D^\dagger_{\mathrm{rig},K}[t^{-1}](W)$ as 
$$D:=D^\dagger_{\mathrm{rig},K}[t^{-1}](W)=\bigoplus_{i=1}^{k}\mathbf{B}^\dagger_{\mathrm{rig},K}[t^{-1}]\underline{e}_i,$$where $k=\mathrm{rk}(W)$. Every element of $D$ can be written as linear combinations of elements on the form $a t^i \underline{e}_n$, with $a\in \mathbf{B}^\dagger_{\mathrm{rig},K}[t^{-1}]$, $i\in \mathbb{Z}$, and $t\nmid a$. 

 For simplicity of notation, we skip mentioning the factors $(1-p)$ and $(1-\chi(\gamma))$ in $\nabla^{(\phi)}$ and $\nabla^{(\gamma)}$, respectively.

Beginning with $\phi$, we compute
\begin{equation}\label{eq:genexphi}
\llangle at^i\underline{e}_n\cdot\nabla^{(\phi)},bt^j\underline{e}_m\cdot\nabla^{(\phi)}\rrangle\\
=t^{i+j}\big(p^i\phi_K(a)b-p^j\phi_K(b)a\big)\cdot\nabla^{(\phi)}.
\end{equation}

Now, as for the case of $\gamma$ we compute
\begin{equation}\label{eq:genexgamma}
\llangle at^i\underline{e}_n\cdot\nabla^{(\gamma)},bt^j\underline{e}_m\cdot\nabla^{(\gamma)}\rrangle\\
=t^{i+j}\big(\chi(\gamma)^i\phi_K(a)b-\chi(\gamma)^j\phi_K(b)a\big)\cdot\nabla^{(\gamma)}.\end{equation}
Notice the similarities between the two cases. 

\subsection{Crystalline representations}
When the fibre $\mathscr{E}_{\mathfrak{m}}$ is crystalline we can say more explicitly what the corresponding hom-Lie algebras look like. The reason for this is that associated to a crystalline representation is a $(\phi,\Gamma)$-module over a much simpler ring, namely $\mathbf{B}_K^+$. The resulting module is called a Wach module. 

Currently there is no theory of Wach modules over absolutely ramified fields $K/\mathbb{Q}_p$, so in this subsection we have to assume that $K=K_0=W(k)[p^{-1}]$. 

Let $V$ be a crystalline representation of $K$ with weights in $[a,b]\subset\mathbb{Z}$.
\begin{dfn}The \emph{Wach module} associated with $V$ is the unique finite-rank free $\mathbf{B}^+_K=\mathfrak{o}_K[p^{-1}\big[[\pi]\big][p^{-1}]=K\big[[\pi]\big][p^{-1}]$-module $\mathbf{N}(V)\subset D(V)=(\mathbf{B}\otimes_{\mathbb{Q}_p}V)^{H_K}$ of rank $\mathrm{rk}(\mathbf{N}(V))=\dim_{\mathbb{Q}_p}(V)$, satisfying
\begin{itemize}
\item[(a)] $\mathbf{N}(V)$ is stable under the action of $\Gamma$, and this action becomes trivial on $\mathbf{N}/\pi\mathbf{N}$;
\item[(b)] $\phi(\pi^{b}\mathbf{N})\subseteq \pi^{b}\mathbf{N}$, and
\item[(c)] $q^{b-a}$ annihilates $\pi^{b}\mathbf{N}/\langle\phi(\pi^{-a}\mathbf{N})\rangle$, where $q:=\phi(\pi)/\pi$ and $\langle\phi(\pi^{b}\mathbf{N})\rangle$ denotes the subgroup generated by $\phi(\pi^{-b}\mathbf{N})$.
\end{itemize}We filter the Wach module $\mathbf{N}(V)$ as
$$\mathrm{Fil}^i(\mathbf{N}):=\{x\in\mathbf{N}\mid \phi(x)\in q^i\mathbf{N}\}.$$
\end{dfn}
Notice that a Wach module is not necessarily stable under $\phi$. However, if $b=0$ (i.e., when $V$ \emph{positive} crystalline), $\mathbf{N}(V)$ becomes a $(\phi,\Gamma)$-module. This is in fact the case we will be mostly interested in.

The following theorem is proven in \cite{Berger3}.
\begin{thm}\label{thm:Berger3}
A $p$-adic $G_K$-representation $V$ with weights in $[a,b]$ is crystalline if and only if there is a Wach module $\mathbf{N}(V)\subset D(V)=(\mathbf{B}\otimes_{\mathbb{Q}_p}V)^{H_K}$. The associated functor $$\mathsf{Cris}(G_K)\to \mathsf{Wach}(\mathbf{B}^+_K),\quad V\mapsto \mathbf{N}(V)$$ is an equivalence of categories. Furthermore, there is an isomorphism of filtered $\phi$-modules
$$D_{\mathrm{cris}}(V)\xrightarrow{\approx}\mathbf{N}(V)/\pi\mathbf{N}(V)$$ when $V$ is crystalline. 
\end{thm}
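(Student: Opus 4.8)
\medskip
\noindent\emph{Proof strategy.} The theorem is proved in \cite{Berger3}; here is the shape the argument takes. The plan is to reduce everything to the dictionary between $(\phi,\Gamma)$-modules over the Robba ring and filtered $\phi$-modules. First I would invoke Fontaine's equivalence recalled above: a $p$-adic representation $V$ of $G_K$ is the same datum as its \'etale $(\phi,\Gamma)$-module $D(V)=(\mathbf{B}\otimes_{\mathbb{Q}_p}V)^{H_K}$ over $\mathbf{B}_K$, which by Cherbonnier--Colmez \cite{CherbonnierColmez} already descends to $D^\dagger_K(V)$ and hence to $D^\dagger_{\mathrm{rig},K}(V)$ over $\mathbf{B}^\dagger_{\mathrm{rig},K}$. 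Thus the content is threefold: (i) single out, inside $D(V)$, a canonical $\mathbf{B}^+_K$-lattice satisfying (a)--(c) when $V$ is crystalline; (ii) recognize crystallinity of $V$ from the mere existence of such a lattice; and (iii) compare $\mathbf{N}(V)/\pi\mathbf{N}(V)$ with $D_{\mathrm{cris}}(V)$.

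\medskip
\noindent\emph{Construction of $\mathbf{N}(V)$.} For $V$ crystalline with Hodge--Tate weights in $[a,b]$, I would use Berger's structure theorem (\cite{Berger1}): after inverting $t$, the $(\phi,\Gamma)$-module $D^\dagger_{\mathrm{rig},K}(V)[t^{-1}]$ is isomorphic, as a $(\phi,\Gamma)$-module, to $\mathbf{B}^\dagger_{\mathrm{rig},K}[t^{-1}]\otimes_{K_0}D_{\mathrm{cris}}(V)$ with $\phi$ acting on both factors and $\Gamma$ acting only on $\mathbf{B}^\dagger_{\mathrm{rig},K}[t^{-1}]$ (trivially on $D_{\mathrm{cris}}(V)$). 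Picking a $K_0$-basis of $D_{\mathrm{cris}}(V)$ adapted to the Hodge filtration and rescaling it by suitable powers of $\pi$ (so as to land in $D(V)$ and to make the $\Gamma$-action trivial modulo $\pi$, using $\gamma(t)=\chi(\gamma)t$ and $\gamma(\pi)\equiv\pi\bmod\pi^2$) produces the candidate lattice $\mathbf{N}(V)$. Properties (a), (b), (c) are then read off from the relation $q=\phi(\pi)/\pi$ together with the position in $[a,b]$ of the filtration jumps of $D_{\mathrm{cris}}(V)$; uniqueness follows because any lattice with properties (a)--(c) has the same image in $D(V)[\pi^{-1}]$ and the same $\phi$- and $\Gamma$-behaviour, forcing equality.

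\medskip
\noindent\emph{Converse, equivalence, and the filtered $\phi$-module.} Conversely, given a Wach module $\mathbf{N}$, inverting $\pi$ yields an \'etale $(\phi,\Gamma)$-module over $\mathbf{B}_K$, hence a representation $V$; crystallinity of $V$ is forced by (a), since triviality of the $\Gamma$-action modulo $\pi$ pins down the associated $(\phi,\Gamma)$-module over the Robba ring to be of the split shape above, and Berger's criterion then gives crystallinity with weights in $[a,b]$ via (b)--(c). Full faithfulness of $V\mapsto\mathbf{N}(V)$ is immediate from Fontaine's equivalence plus the fact that $\mathbf{N}(V)$ is determined by $D(V)$, and essential surjectivity is exactly the assertion that the two constructions are mutually inverse. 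Finally, the map $D_{\mathrm{cris}}(V)\xrightarrow{\approx}\mathbf{N}(V)/\pi\mathbf{N}(V)$ is induced by $\mathbf{N}(V)\subset D(V)\subset\mathbf{B}_{\mathrm{cris}}\otimes_{\mathbb{Q}_p}V$ followed by the reduction $\pi\mapsto 0$ (a chosen embedding $\mathbf{B}^+_K\hookrightarrow\mathbf{B}_{\mathrm{cris}}$ with $\pi\mapsto 0$); it is $\phi$-equivariant because $\phi(\pi)\in\pi\mathbf{B}^+_K$, and it respects the filtrations because the defining condition $\mathrm{Fil}^i\mathbf{N}=\{x\mid\phi(x)\in q^i\mathbf{N}\}$ reduces modulo $\pi$ (using $q\equiv p\bmod\pi$) precisely to the condition cutting out $\mathrm{Fil}^iD_{\mathrm{cris}}(V)$.

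\medskip
\noindent\emph{Main obstacle.} The genuinely hard step is (i): constructing $\mathbf{N}(V)$ for an arbitrary crystalline $V$ rests on Berger's earlier structure theory of $(\phi,\Gamma)$-modules over the Robba ring --- the $p$-adic local monodromy theorem and the equivalence with (weakly admissible) filtered $\phi$-modules --- together with delicate lattice estimates producing the exact exponents in (b) and (c). Everything else is bookkeeping layered on top of the equivalences of Fontaine and of Cherbonnier--Colmez.
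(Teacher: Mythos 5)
The paper does not prove this theorem; it imports it from Berger with the single line ``The following theorem is proven in \cite{Berger3}.'' There is therefore no in-paper argument to compare your sketch against, and the honest answer is that you are asked only to believe the citation. Your outline of what Berger actually does --- Fontaine's equivalence, Cherbonnier--Colmez overconvergence, Berger's identification of $D^\dagger_{\mathrm{rig},K}(V)[t^{-1}]$ with $\mathbf{B}^\dagger_{\mathrm{rig},K}[t^{-1}]\otimes_{K_0}D_{\mathrm{cris}}(V)$ for crystalline $V$ --- is the right shape at the top level.

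That said, one concrete step in your sketch is wrong as stated. You describe the isomorphism $D_{\mathrm{cris}}(V)\xrightarrow{\approx}\mathbf{N}(V)/\pi\mathbf{N}(V)$ as induced by ``a chosen embedding $\mathbf{B}^+_K\hookrightarrow\mathbf{B}_{\mathrm{cris}}$ with $\pi\mapsto 0$.'' No such embedding exists: any ring map sending $\pi$ to $0$ kills the ideal $(\pi)$ and is not injective. The standard map $\mathbf{B}^+_K\to\mathbf{B}^+_{\mathrm{dR}}$ (or to $\mathbf{B}^+_{\mathrm{cris}}$) sends $\pi$ to $[\epsilon]-1$, which is a nonzero element of the maximal ideal. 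The actual comparison in Berger's argument is more delicate: one works with the $\phi$-stable lattice, pushes along $\phi^{-n}$ for $n\gg 0$, and uses the relation between $\mathbf{B}^+_K/\pi\cong K_0$ and the de Rham/crystalline comparison. Likewise, the filtration match is not just ``reduce $\phi(x)\in q^i\mathbf{N}$ modulo $\pi$ using $q\equiv p$''; the filtration on $D_{\mathrm{cris}}(V)$ is inherited from $\mathrm{Fil}^\bullet\mathbf{B}_{\mathrm{dR}}$, and showing it coincides with the reduction of the Wach filtration is one of the technically substantive points of the theorem, not an immediate congruence. These details would need to be repaired if you wanted to turn the sketch into a proof rather than a pointer to the literature, which is all the paper itself offers.
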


Now, suppose that $V:=\mathscr{E}_\mathfrak{m}$ is crystalline with associated Wach module 
$$\mathbf{N}:=\bigoplus_{i=1}^kK\big[[\pi]\big]\underline{e}_i,$$ where $\{\underline{e}_i\}$ is a basis for $\mathbf{N}$ over $K\big[[\pi]\big]$. 

In view of the definition of a Wach module it is natural to enlarge $\mathbf{B}^+_K$ by inverting $\pi$, $$\mathbf{B}^+_{K,\pi}:=K\big[[\pi]\big][\pi^{-1}].$$

Over $\mathbf{B}^+_{K,\pi}$ we have $\nabla^{(\phi)}_{\pi}:=\pi^{-1}(\id-\phi)$ and $\nabla^{(\gamma)}_{\pi}:=\pi^{-1}(\id-\gamma)$. 

Remember that $V$ is assumed to be positive, meaning that all Hodge--Tate weights are in an interval $[a,0]\subset\mathbb{Z}$. 

Consider first, $\nabla^{(\phi)}_{\pi}:=\pi^{-1}(\id-\phi)$ and 
$$\SDer^\pi_\phi\big(K\big[[\pi]\big][\pi^{-1}]\big)=K\big[[\pi]\big][\pi^{-1}]\cdot \nabla^{(\phi)}_\pi\approx\bigoplus_{i\gg -\infty}K\pi^i \cdot\nabla^{(\phi)}_\pi$$acting on $K\big[[\pi]\big][\pi^{-1}]$. Notice that $\{\pi^i\cdot\nabla^{(\phi)}_\pi\}$ is a basis over $K$ for $\SDer^\pi_\phi\big(K\big[[\pi]\big][\pi^{-1}]\big)$. We extend the bracket in Theorem (\ref{thm:twistprod}) formally to $\SDer^\pi_\phi\big(K\big[[\pi]\big][\pi^{-1}]\big)$ with
$$\llangle a\pi^i\cdot\nabla^{(\phi)}_\pi, b\pi^j\cdot \nabla^{(\phi)}_\pi\rrangle=
\big(\phi(a\pi^i)\nabla^{(\phi)}_\pi(b\pi^j)-\phi(b\pi^j)\nabla^{(\phi)}_\pi(a\pi^i)\big)
\cdot\nabla^{(\phi)}_\pi.$$ Putting $\phi(\pi)=q\pi$ and calculating 
$$\nabla^{(\phi)}_\pi(\pi^i)=(1-q^i)\pi^{i-1},$$ this product can be computed to be
$$\llangle a\pi^i\cdot\nabla^{(\phi)}_\pi, b\pi^j\cdot \nabla^{(\phi)}_\pi\rrangle= 
\pi^{i+j}\big(q^i\phi_K(a)b-q^j\phi_K(b)a\big)\cdot\nabla^{(\phi)}_\pi.
$$Compare this with (\ref{eq:genexphi}).

The $\gamma$-case is simpler since $\gamma$ doesn't act on $K$: 
$$\SDer^\pi_\gamma\big(K\big[[\pi]\big][\pi^{-1}]\big)=K\big[[\pi]\big][\pi^{-1}]\cdot \nabla^{(\gamma)}_\pi\approx\bigoplus_{i\gg -\infty}K\pi^i \cdot\nabla^{(\gamma)}_\pi.$$We have, analogously as for $\phi$,  $\{\pi^i\cdot\nabla^{(\gamma)}_\pi\}$ as a basis over $K$ for $\SDer^\pi_\gamma\big(K\big[[\pi]\big][\pi^{-1}]\big)$, and we have 
$$\llangle\pi^i\cdot\nabla^{(\gamma)}_\pi, \pi^j\cdot \nabla^{(\gamma)}_\pi\rrangle=
\big(\gamma(\pi^i)\nabla^{(\gamma)}_\pi(\pi^j)-\gamma(\pi^j)\nabla^{(\gamma)}_\pi(\pi^i)\big)
\cdot\nabla^{(\gamma)}_\pi.$$Putting $\vartheta:=\gamma(\pi)$, we compute
$$\llangle \pi^i\cdot\nabla^{(\gamma)}_\pi, \pi^j\cdot\nabla^{(\gamma)}_\pi\rrangle=\big(\vartheta^i\pi^{j-1}-
\vartheta^j\pi^{i-1}\big)\cdot\nabla^{(\gamma)}_\pi.$$This time we compare with (\ref{eq:genexgamma}).

It is clearly interesting to study the effect of reduction with respect to $\pi$, i.e., what happens to the hom-Lie algebra structure on reduction
$$\mathbf{N}(V)\twoheadrightarrow \mathbf{N}(V)/\pi\mathbf{N}(V).$$By definition, we know that $\gamma$ acts trivially on $\overline{\mathbf{N}(V}):=\mathbf{N}(V)/\pi\mathbf{N}(V)$ so we loose one structure. However, the structure coming from the Frobenius $\phi$ is still highly interesting. 

By Berger's theorem \ref{thm:Berger3}, we know that $D_\mathrm{cris}(V)\approx \mathbf{N}(V)/\pi\mathbf{N}(V)$ as filtered $\phi$-modules over $K=K_0$. Reducing $\mathbf{N}(V)$ modulo $\pi$ gives $\overline{\mathbf{N}(V})$:
$$\mathbf{N}(V)=\bigoplus_{i=1}^kK\big[[\pi]\big]\underline{e}_i\longrightarrow
\overline{\mathbf{N}(V}):=\bigoplus_{i=1}^k K\underline{e}_i.$$In general, the action of the induced $\phi$ on $\overline{\mathbf{N}(V})$ is simpler than on $\mathbf{N}(V)$. The products of $\SDer^\pi_\phi(\mathbf{N}(V))$ reduces modulo $\pi$ to
$$
\llangle a \cdot\overline{\nabla}^{(\phi)}_\pi, 
b \cdot \overline{\nabla}^{(\phi)}_\pi\rrangle=\big(\phi(a)b-\phi(b)a\big)\cdot\overline{\nabla}^{(\phi)}_\pi
$$in the $K_0$-vector space $\SDer_\phi^\pi(\overline{\mathbf{N}^!(V}))^{\mathrm{red}}$.

\begin{example}In \cite{BergerLiZhu} is constructed an analytic (parametric) family of $2$-dimensional crystalline representations via the associated Wach modules. We will use this family to illustrate the above theory. 

First, fix $k\geq 2$ and put $m:=\lfloor (k-2)/(p-1)\rfloor$. 
The construction begins with two matrices $P(T), G(T)\in\mathrm{Mat}_2\big(\mathbb{Z}_p\big[[\pi,T]\big]\big)$ defined as
$$P(T):=\begin{pmatrix}
0 & -1\\
q^{k-1} & zT\end{pmatrix},\quad 
G(T):=\begin{pmatrix}
g^+ & 0\\
0 & g^-\end{pmatrix},$$ where $z:=z_0+z_1\pi+\cdots+z_{k-2}\pi^{k-2}\in\mathbb{Z}_p\big[[\pi]\big]$, $T$ is an  indeterminate, 
$$
\lambda^+:=\frac{\underset{n\geq 0}{\prod}\frac{\phi^{2n+1}(q)}{p}}{\gamma\Big(\underset{n\geq 0}{\prod}\frac{\phi^{2n+1}(q)}{p}\Big)}\quad\text{and}\quad \lambda^-:=\frac{\underset{n\geq 0}{\prod}\frac{\phi^{2n}(q)}{p}}{\gamma\Big(\underset{n\geq 0}{\prod}\frac{\phi^{2n}(q)}{p}\Big)},
$$and finally,
$$g^+:=\Big(\frac{\lambda^+}{\gamma(\lambda^+)}\Big)^{k-1},\quad g^-:=\Big(\frac{\lambda^-}{\gamma(\lambda^-)}\Big)^{k-1}.$$ Notice that $\lambda^+$ and $\lambda^-$ are power series in $\pi$. The point here is that $P(T)$ will represent the action of $\phi$ and $G(T)$ the action of $\gamma$. Since $\phi$ and $\gamma$ must commute, $P(T)$ and $G(T)$ must satisfy $P(T)\phi(G(T))=G(T)\gamma(P(T))$. In addition if $G^{\gamma'}(T)$ represents $\gamma'\in\Gamma$, then associativity of the action implies that we must impose $G^{\gamma\gamma'}(T)=G(T)\gamma(G^{\gamma'}(T))$. This clearly puts severe restrictions of the possible choices of $P(T)$ and $G(T)$. However, in \cite{BergerLiZhu} the authors prove that given $P(T)$ on the above form, there is a \emph{unique} $G(T)$ such that the above conditions are satisfied. The functions $g^+$ and $g^-$ are rather complicated so writing down an explicit expression seems (to me) to be very hard in general (even in specific cases). 

Let $\mathbf{N}(k,T)$ be a rank-two $\mathbb{Z}_p\big[[\pi]\big]$ with standard basis $\{\underline{e}_1,\underline{e}_2\}$. Then Berger et al show that the matrices $P(\alpha)$ and $G(\alpha)$ defines a Wach module structure on $\mathbf{N}(k,\alpha)$, for $\alpha\in\mathfrak{m}_{\mathbb{Q}_p}$ coming from a representation of Hodge--Tate weights $0$ and $1-k$. In explicit terms, 
$$\phi(\underline{e}_1)=q^{k-1}\underline{e}_2,\quad \phi(\underline{e}_2)=-\underline{e}_1+\alpha z\underline{e}_2,\quad \gamma(\underline{e}_1)=g^+\underline{e}_1,\quad\text{and}\quad \gamma(\underline{e}_2)=g^-\underline{e}_2.$$Remember that $q=\phi(\pi)/\pi$. The reduction $\overline{\mathbf{N}(k,\alpha})$ becomes
$$\phi(\underline{e}_1)=p^{k-1}\underline{e}_2,\quad\text{and}\quad \phi(\underline{e}_2)=-\underline{e}_1+\alpha p^m\underline{e}_2,$$as is easily seen. The filtration of $\overline{\mathbf{N}(k,\alpha})$ becomes 
$$\mathrm{Fil}^i(\overline{\mathbf{N}(k,\alpha}))=
\begin{cases}
	\overline{\mathbf{N}(k,\alpha}), & \text{if}\,\,\, i\leq 0\\
	\mathbb{Q}_p \underline{e}_1, & \text{if}\,\,\, 1\leq i\leq k-1\\
	0 & \text{if}\,\,\, i\geq k.
\end{cases}$$

 Put $a_p:=p^m\alpha$. One can prove \cite{Breuil} that $\overline{\mathbf{N}(k,\alpha})\approx D_{\mathrm{cris}}(V^\ast_{k,a_p})$, where $V^\ast_{k,a_p}:=\Hom_{\mathbb{Q}_p}(V_{k,a_p},\mathbb{Q}_p)$ for some irreducible, crystalline representation $V_{k,a_p}$ of Hodge--Tate weights $0$ and $k-1$. The representation $V_{k,a_p}$, and hence $\overline{\mathbf{N}(k,\alpha})$, can be viewed as a ``deformation'' of $V_{k,0}$, parametrized by multiples of the trace of Frobenius, i.e., $\mathrm{tr}(P(\alpha))$. 

Ok, now we compute. For $\SDer^\pi_\phi(\mathbf{N}(k,\alpha))$ we get:
\begin{align*}
&\llangle a\pi^i\cdot \nabla^{(\phi)}_\pi, b\pi^j\cdot \nabla^{(\phi)}_\pi\rrangle
=\pi^{i+j-1}\big(q^{i}\phi_K(a)b-q^j\phi(b)_Ka\big)\cdot\nabla^{(\phi)}_\pi;
\end{align*}for $\SDer^\pi_\gamma(\mathbf{N}(k,\alpha))$:
\begin{align*}
&\llangle \pi^i\cdot \nabla^{(\gamma)}_\pi, \pi^j\cdot \nabla^{(\gamma)}_\pi\rrangle
=\pi^{i+j-1}\big(\chi(\gamma)^i\phi_K(a)b-\chi(\gamma)^j\phi_K(b)a\big)\cdot\nabla^{(\gamma)}.
\end{align*}
The reduction $\SDer_\phi^\pi(\overline{\mathbf{N}(k,\alpha}))^{\mathrm{red}}$ simply becomes
\begin{align*}
&\llangle a\cdot \overline{\nabla}^{(\phi)}_\pi, b
\cdot \overline{\nabla}^{(\phi)}_\pi\rrangle=\big(\phi_K(a)b-\phi_K(b)a\big)\cdot\nabla^{(\phi)}_\pi.
\end{align*}

These hom-Lie algebras have the following actions on $\mathbf{N}(k,\alpha)$ and $\overline{\mathbf{N}(k,\alpha})$:
\begin{align*}&\SDer^\pi_\phi(\mathbf{N}(k,\alpha)): \qquad
\begin{cases}
a\pi^i\cdot\nabla^{(\phi)}_\pi(\underline{e}_1)=a\pi^{i-1}(1-q^{k-1})\underline{e}_2,\\ a\pi^i\cdot\nabla^{(\phi)}_\pi(\underline{e}_2)=a\pi^{i-1}\underline{e}_1+a\pi^{i-1}(1-\alpha z)\underline{e}_2;
\end{cases}\\
&\SDer^\pi_\gamma(\mathbf{N}(k,\alpha)): \qquad
\begin{cases} a\pi^i\cdot\nabla^{(\gamma)}_\pi(\underline{e}_1)=a\pi^{i-1}(1-g^+)\underline{e}_1,\\ a\pi^i\cdot\nabla^{(\gamma)}_\pi(e_2)=a\pi^{i-1}(1-g^-)\underline{e}_2;
\end{cases}\\
&\SDer_\phi^\pi(\overline{\mathbf{N}(k,\alpha}))^{\mathrm{red}}:\quad
\begin{cases}
a\overline{\nabla}^{(\phi)}_\pi(\underline{e}_1)=q(1-p^{k-1})\underline{e}_2,\\
a\overline{\nabla}^{(\phi)}_\pi(\underline{e}_2)=a\underline{e}_1+a(1-a_p)\underline{e}_2.
\end{cases}
\end{align*}
These all define parametric families of actions of hom-Lie algebras $\SDer_\phi^\pi(\mathbf{N}(k,\alpha))$ $\SDer_\gamma^\pi(\mathbf{N}(k,\alpha))$ and $\SDer_\phi^\pi(\overline{\mathbf{N}(k,\alpha}))^{\mathrm{red}}$, parametrized by $\alpha$, and in the case of the reduction, by the traces of Frobenius.
\end{example}
\section{Conclusions}
A popular theme in recent times (Bloch--Kato, Fontaine, Perrin-Riou, Colmez,...) have been to combine $p$-adic Hodge theory and Iwasawa theory (and this was actually why I started to think about all these possible applications of hom-Lie algebras). This is most easily seen by the extensive use of Fontaine's $(\phi,\Gam)$-modules in studying Iwasawa theory for crystalline and more generally de Rham representations of local Galois theory. 

Kato's philosophy that $L$-functions and $p$-adic representations are simply different points of observation of the same ``galaxy" provides some indication (in my opinion) that hom-Lie algebras might fit in this picture as a part of his galaxy, which I have tried to convey with the above simple and incomplete examples. I have a hope that the above can be of use and interest in this area. But who knows? 

\bibliographystyle{alpha}
\bibliography{refarithom}
\end{document}